\newtheorem{thm}{Theorem}[section]
\newtheorem{prop}[thm]{Proposition}
\newtheorem{lem}[thm]{Lemma}
\newtheorem{cor}[thm]{Corollary}
\newtheorem*{thm*}{Theorem}
\theoremstyle{definition}
\newtheorem{ex}[thm]{Example}
\newtheorem{rem}[thm]{Remark}
\theoremstyle{remark}
\numberwithin{equation}{section}
\newcommand{\buc}{\bu^{\times}_D}
\newcommand{\bucl}{\bu^{\times}_{D,\ell}}
\newcommand{\ord}{\operatorname{ord}}
\newcommand{\nocontentsline}[3]{}
\newcommand{\tocless}[2]{\bgroup\let\addcontentsline=\nocontentsline#1{#2}\egroup}
\newcommand{\im}{\operatorname{im}}
\newcommand{\Hom}{\operatorname{Hom}} 
\newcommand{\Supp}{\operatorname{Supp}}
\newcommand{\rank}{\operatorname{rank}} 
\newcommand{\cL}{{\mathcal L}}
\newcommand{\cO}{{\mathcal O}}
\newcommand{\cR}{{\mathcal R}}
\newcommand{\C}{{\mathbb C}}
\newcommand{\R}{{\mathbb R}}
\newcommand{\pp}{\mathbb{P}}
\newcommand{\Z}{{\mathbb Z}}
\newcommand{\T}{{\mathbb T}}
\newcommand{\D}{{\mathbb D}}
\newcommand{\bu}{{\mathbf u}}
\newcommand{\bmu}{{\boldsymbol \mu}}
\newcommand{\bnu}{{\boldsymbol \nu}}
\begin{document}

\title{How to count zeroes of polynomials on quadrature domains using the Bezout matrix}

\author{Eli Shamovich}
\address{Dept. of Pure Mathematics, University of Waterloo, Waterloo, ON, Canada}
\email{eshamovi@uwaterloo.ca}

\author{Victor Vinnikov}
\address{Department of  Mathematics\\ 
Ben-Gurion University of the Negev\\
8410501 Beer-Sheva, Israel}
\email{vinnikov@math.bgu.ac.il}

\begin{abstract}
Classically, the Bezout matrix or simply Bezoutian of two polynomials is used to locate the roots of the polynomial and, in particular, test for stability. In this paper, we develop the theory of Bezoutians on real Riemann surfaces of dividing type. The main result connects the signature of the Bezoutian of two real meromorphic functions to the topological data of their quotient, which can be seen as the generalization of the classical Cauchy index. As an application, we propose a method to count the number of zeroes of a polynomial in a quadrature domain using the inertia of the Bezoutian. We provide examples of our method in the case of simply connected quadrature domains.
\end{abstract}

\maketitle
\tableofcontents

\section{Introduction} \label{sec:intro}

Locating zeroes of polynomials or more precisely verifying whether all of the zeroes of a given polynomial lie in a certain domain in $\C$ is an old problem indeed. In particular, a polynomial is called stable if all of its roots lie in a left half-plane or the unit disc. Questions of stability arise naturally when considering linear systems and applying Laplace transform. The two modes of stability just mentioned correspond to continuous-time and discrete-time linear systems, respectively. Roughly speaking stability of the polynomial in the denominator of the Laplace transform of the solution implies that the solution is stable under small perturbations, i.e, a small perturbation in the data results in small ripples in the solution through time. For comparison, an unstable solution will change drastically as time passes from the original when a small perturbation is introduced in the initial data. There are quite a few stability criteria available for use for the cases of the disc and the half-plane. The goal of this paper is to propose a method of testing the stability of a polynomial on a general quadrature domain. Our method is a generalization of the classical method that applies the Bezout matrix to get information on the number of zeroes of a polynomial with respect to a half-plane.

The Bezout matrix $B(f,g)$ or simply Bezoutian of two polynomials $f$ and $g$ is the matrix of coefficients of the polynomial
\[
\frac{f(t) g(s) - f(s) g(t)}{t - s}.
\]
The theory of Bezoutians originates in the works of Hermite, Sylvester, Cayley, and others. The excellent paper \cite{KreNai81} contains a detailed review of Bezoutians and related topics with detailed references to the original manuscripts. Bezoutians for matrix and operator valued polynomials were studied by Lerer, Rodman, and Tismenetsky (see for example \cite{LerTis82} and \cite{LRT84}). The main result of the paper is a generalization to the case of real Riemann surfaces of dividing type of the following classical result \cite[XV]{KreNai81}:

\begin{thm*}[Hermite]
Let $f$ be a polynomial and denote $\overline{f} = \overline{f(\bar{z})}$. Let $n_+$, $n_-$, and $n_0$, be the number of positive eigenvalues, negative eigenvalues and the dimension of the kernel of $-i B(f,\overline{f})$. Then $f$ has exactly $n_0$ roots in common with $\overline{f}$ and moreover, it has additional $n_+$ roots in the upper half-plane and additional $n_-$ roots in the lower half-plane. 
\end{thm*}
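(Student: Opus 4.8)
The plan is to reduce to the classical real-coefficient Hermite theorem and then to the argument principle. Since $\overline{e^{i\theta}f} = e^{-i\theta}\overline f$ and the Bezoutian is bilinear, $B(e^{i\theta}f,\overline{e^{i\theta}f}) = B(f,\overline f)$, so I may assume the leading coefficient of $f$ is real. Write $f = f_1 + i f_2$ with $f_1 = \tfrac12(f+\overline f)$ and $f_2 = \tfrac1{2i}(f-\overline f)$; these are real polynomials with $\overline f = f_1 - i f_2$, $\deg f_1 = \deg f$, and $\deg f_2 < \deg f$. Expanding $B(f_1+if_2,\, f_1-if_2)$ by bilinearity and using $B(p,p) = 0$ and $B(p,q) = -B(q,p)$ gives $B(f,\overline f) = -2i\,B(f_1,f_2)$, hence $-iB(f,\overline f) = -2\,B(f_1,f_2)$: our Hermitian matrix is $-2$ times a real symmetric Bezoutian, so it has the same kernel and the opposite signature.

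For the number $n_0$ I would invoke the standard identity $\dim\ker B(p,q) = \deg\gcd(p,q)$ (see \cite{KreNai81}). One has $\gcd(f_1,f_2) = \gcd(f,\overline f)$: a root $\alpha$ of $f$ is annihilated by both $f_1$ and $f_2$ exactly when $f(\alpha) = \overline f(\alpha) = 0$, i.e.\ when $\bar\alpha$ is again a root of $f$; so this common degree is precisely the number of roots of $f$, with multiplicity, shared with $\overline f$. The same remark shows every unshared root $\alpha$ has $\bar\alpha$ a non-root and hence lies off $\mathbb R$, so the $\deg f - n_0$ unshared roots split into $N_+$ in the open upper half-plane and $N_-$ in the open lower one, and it remains to prove $n_+(B(f_1,f_2)) - n_-(B(f_1,f_2)) = N_- - N_+$.

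Here I would factor $f = hg$ with $h = \gcd(f,\overline f)$ monic real; then $f_1 = h\varphi_1$, $f_2 = h\varphi_2$ with $\varphi_1,\varphi_2$ coprime real polynomials, $\deg\varphi_1 = \deg g > \deg\varphi_2$, and $g = \varphi_1 + i\varphi_2$ has no real roots. The classical congruence $B(h\varphi_1, h\varphi_2) \cong B(\varphi_1,\varphi_2) \oplus 0_{\deg h}$ reduces matters to $g$, and then two classical inputs finish the proof. (a) For coprime real $\varphi_1,\varphi_2$ with $\deg\varphi_1 > \deg\varphi_2$, the signature of $B(\varphi_1,\varphi_2)$ equals the Cauchy index $\operatorname{Ind}_{\mathbb R}(\varphi_2/\varphi_1)$ \cite{KreNai81}. (b) Applying the argument principle to $g$ over the boundary of a large half-disk in the upper half-plane, and using that $g$ has no real zeros, one gets $2\pi N_+ = \Delta_{\mathbb R}\arg g(x) + \pi\deg g$, while $\Delta_{\mathbb R}\arg(\varphi_1(x) + i\varphi_2(x)) = -\pi\operatorname{Ind}_{\mathbb R}(\varphi_2/\varphi_1)$, since $\varphi_2/\varphi_1 \to 0$ at $\pm\infty$ and every jump of $\varphi_2/\varphi_1$ through $\infty$ is a crossing of the imaginary axis in the $g$-plane. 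Combining with $N_+ + N_- = \deg g$ yields $\operatorname{Ind}_{\mathbb R}(\varphi_2/\varphi_1) = N_- - N_+$, which together with (a) and the scalar $-2$ from the first paragraph gives $n_+ = N_+$ and $n_- = N_-$.

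The content is entirely classical; the real work is bookkeeping. One must fix the various overall signs once and for all against a base case — e.g.\ $f = z - i$, where $f_1 = z$, $f_2 = -1$, $-iB(f,\overline f) = [\,2\,]$, and $f$ has one root in the upper half-plane — and one must establish or cite inputs (a) and the multiplication congruence (both standard, via the Euclidean algorithm and Hankel matrices, or via the companion-matrix symmetrizer identity $B(p,q) = S\,q(C)$ with $S = B(p,1)$). An alternative that sidesteps (a): $\dim\ker B(f_1,f_2)$ is locally constant on the stratum where $\deg\gcd(f_1,f_2)$ is fixed, so the inertia of $-2B(f_1,f_2)$ is constant there; the set of degree-$d$ polynomials with prescribed $N_+, N_-$ and no shared root is connected; hence it suffices to compute the inertia at one explicit representative such as $\prod_{k=1}^{N_+}(z - ik)\cdot\prod_{k=1}^{N_-}\big(z + i(N_+ + k)\big)$.
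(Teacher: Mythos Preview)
Your argument is correct. Note, however, that the paper does not give its own proof of Hermite's theorem: it is quoted in the introduction as a classical result from \cite{KreNai81}, and the body of the paper is devoted to its generalization to real Riemann surfaces of dividing type. That said, if one specializes the paper's machinery to the Riemann sphere with complex conjugation (so that a polynomial $f$ has its only pole at the real point $\infty$), the proof that emerges is essentially yours. The decomposition $f = f_r + i f_i$ and the relation between $B(f,f^{\tau})$ and $B(f_r,f_i)$ appear at the start of Section~\ref{sec:quadrature} and in Corollary~\ref{cor:cayley}; your kernel computation is Lemma~\ref{lem:Bezoutian_nullity}; the identification of the signature with a Cauchy index is Theorem~\ref{thm:from_sig_to_coh}; and your argument-principle step is exactly the second proof of Theorem~\ref{thm:argument}, with the final count packaged as Corollary~\ref{cor:classic}. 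The one structural difference is that the paper also determines the full inertia directly (Theorem~\ref{thm:signature}) by diagonalizing the form on an explicit basis built from a generic fiber of $h = f_r/f_i$, so that the Cauchy-index route becomes a corollary rather than the engine of the proof; your factor-out-the-gcd congruence $B(h\varphi_1,h\varphi_2) \cong B(\varphi_1,\varphi_2)\oplus 0$ and the deformation alternative at the end have no direct counterpart there.
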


It is natural to ask what kind of domains in $\C$ admit a generalization of the above theorem? Two ingredients in the above construction stand out. First, the anti-holomorphic involution that in the case of the upper half-plane is just the conjugation. The complex conjugation turns the Riemann sphere into a compact real Riemann surface of dividing type (see Section \ref{sec:riemann_surfaces} for the definitions). Real Riemann surfaces of dividing type arose naturally in the works of Ahlfors \cite{Ahl47,Ahl50}, in the study of definite determinantal representations of algebraic curves \cite{HelVin07, Vin93, Vppf} and the study of non-selfadjoint operators and in particular dilation theory of semigroups \cite{LKMV, ShaVin17}. However, in this paper, we are interested primarily in another natural source of real Riemann surfaces of dividing type, namely, quadrature domains. Recall that a domain $U \subset \C$ is called a quadrature domain if there exists $a_1,\ldots,a_r \in U$, non-negative integers $m_1,\ldots,m_r$ and complex numbers $c_{jk}$ for $j=1,\ldots,r$ and $k = 0,\ldots,m_j$, such that for every absolutely integrable analytic function\footnote{There are definitions of quadrature domains with respect to other classes of functions, but we will use this definition throughout the paper} $f$ on $U$, we have:
\[
\int_{U} f d\mu = \sum_{j=1}^r \sum_{k=0}^{m_j} c_{jk} f^{(k)}(a_j).
\]
Here $\mu$ stands for the Lebesgue measure on the plane. The study of such domains was initiated by Davis \cite{Dav74} and independently by Aharonov and Shapiro \cite{AhaSha76}. The expository article \cite{GusSha05} contains a wealth of information on the history and applications of quadrature domains. It was observed by Gustafsson \cite{Gus83} that $U$ is a quadrature domain if and only if there exists a meromorphic function on $X$, the Schottky double of $U$ (a compact Riemann surface of dividing type obtained by ``gluing'' two copies of $U$ along the boundary) univalent on the ``upper half-plane'' of $X$ and that maps the upper half-plane conformally onto $U$. We discuss quadrature domains in detail in Section \ref{sec:quadrature}.

The second ingredient is the function $\frac{1}{t-s}$, that happens to be the Cauchy kernel on the Riemann sphere, with the involution given by the complex conjugation. Cauchy kernels on Riemann surfaces have been introduced by Ball and the second author in \cite{BV-ZPF} and studied in \cite{AlpVin01} and \cite{AlpVin02} as reproducing kernels for spaces of functions on a Riemann surface. With the two ingredients at hand, we can now define the Bezout matrix of two meromorphic functions on a Riemann surface. Bezoutians in genus $0$ were used by Kravitsky \cite{LKMV} to construct determinantal representations of plane projective curves of genus $0$. Bezoutians on Riemann surfaces of higher genus were already implicit in the works Ball and the second author \cite{BV-ZPF} and Helton and the second author \cite{HelVin07}. Bezoutians on Riemann surfaces were defined in terms of determinantal representations by Shapiro and the second author in \cite{ASV1} and \cite{ASV2}. The definition of a Bezoutian on a Riemann surface using the Cauchy kernel was first introduced in \cite{SV14}. In \cite{SV14} the authors have studied determinantal representations of space projective algebraic curves and in particular, definite determinantal representations. A related notion of resultant on a compact Riemann surface was introduced in \cite{GusTk09} and was used to study the exponential transform associated with quadrature domains (see also \cite{GusTk11}).

The main result of this paper is Theorem \ref{thm:count_quadrature} that allows one to count the number of zeroes of a given polynomial $p$ on a quadrature domain $U$. To do this we consider the extension of the coordinate function $z$ to a meromorphic function on the Schottky double $X$ of $U$ and denote it by $\varphi$. Then $p$ also extends to a meromorphic function on $X$ that we will denote by $p$ as well. Since $X$ is a real Riemann surface it comes with an anti-holomorphic involution $\tau$. Let $p^{\tau}(z) = \overline{p(z^{\tau})}$ and consider $B_{\chi}(p,p^{\tau})$, with respect to a choice of a flat line bundle $\chi$ and a line bundle of half-order differentials $\Delta$. Let $J$ be a signature matrix that associated to the divisor of poles of $f$ and $g$ with respect to $\chi$. Then we have the following theorem (Theorem \ref{thm:count_quadrature}):

\begin{thm*}
Let $n_+$, $n_-$, and $n_0$ stand for the number of positive eigenvalues, negative eigenvalues and the dimension of the kernel of $- i J B_{\chi}(p,p^{\tau})$, respectively. The number of zeroes that $p$ and $p^{\tau}$ have in common is precisely $n_0$. Furthermore, $p$ has additionally $n_- - \deg_X p$ zeroes in $U$, where $\deg_X p$ stands for the degree of $p$ as a meromorphic function on $X$ ($\deg_X p = \deg p \deg_X \varphi$).
\end{thm*}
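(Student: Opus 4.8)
The plan is to derive Theorem~\ref{thm:count_quadrature} from the central signature theorem of this paper---the statement computing the inertia of $-iJB_{\chi}(f,g)$ for two real meromorphic functions $f,g$ on a real Riemann surface of dividing type in terms of the degree of the common zero divisor of $f$ and $g$ and of the generalized Cauchy index of $f/g$ along the real curve---by specializing it to $f=p$, $g=p^{\tau}$ on the Schottky double $X$ and reading off the conclusion through the quadrature-domain dictionary of Section~\ref{sec:quadrature}. Write $X_{+}$ for the half of $X$ on which $\varphi$ restricts to a biholomorphism onto $U$, $X_{-}$ for the other half, and $X_{0}$ for the fixed-point curve of $\tau$. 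Since $\varphi$ is holomorphic on $X_{+}$ and has all of its poles in $X\setminus X_{+}$, the function $p=p(\varphi)$ is holomorphic on $X_{+}$, its pole divisor $D$ is supported in $X\setminus X_{+}$ with $\deg D=\deg p\cdot\deg_{X}\varphi=\deg_{X}p$, and $\varphi$ maps the part of the zero divisor of $p$ lying in $X_{+}$ bijectively onto the zeroes of $p$ in $U$; let $N_{U}$ be their number. Applying $\tau$, the pole divisor of $p^{\tau}$ is $\tau(D)$, supported in $X\setminus X_{-}$ and therefore disjoint from $D$; and on $X_{0}$ we have $p^{\tau}=\overline{p}$, so $p$ and $p^{\tau}$ have exactly the same zeroes on $X_{0}$, say $N_{0}$ with multiplicity, while the remaining $N_{-}:=\deg_{X}p-N_{U}-N_{0}$ zeroes of $p$ lie in $X_{-}$.

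Now apply the signature theorem to $(p,p^{\tau})$ with the disjoint combined pole divisor $D+\tau(D)$. This yields three things. First, $-iJB_{\chi}(p,p^{\tau})$ is Hermitian of size $\deg(D+\tau(D))=2\deg_{X}p$; here the half-order differentials $\Delta$ are what trim the Riemann--Roch count of sections down to exactly the degree of the pole divisor, so that the disjointness of $D$ and $\tau(D)$ produces the factor $2$ which is absent in the classical situation, where the poles of $f$ and $\overline{f}$ coincide at infinity. Second, $n_{0}$ is the degree of the greatest common divisor of the zero divisors of $p$ and $p^{\tau}$, i.e.\ the number of common zeroes---this is the first assertion. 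Third, $n_{+}-n_{-}$ is, up to a global sign fixed by the convention, the generalized Cauchy index of $q:=p/p^{\tau}$ along $X_{0}$ oriented as $\partial X_{+}$.

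It remains to evaluate that index. Since $q^{\tau}=1/q$, the function $q$ is unimodular on $X_{0}$, and because the zeroes and poles of $p$ on $X_{0}$ are cancelled there against those of $p^{\tau}=\overline{p}$, $q$ is actually smooth and nonvanishing on $X_{0}$; hence its generalized Cauchy index is simply the total winding number of $q|_{X_{0}}\colon X_{0}\to\T$, which by the argument principle on $X_{+}$ equals the degree of the divisor of $q$ restricted to $X_{+}$. As $p$ contributes $N_{U}$ zeroes and no poles in $X_{+}$ while $p^{\tau}$ contributes $N_{-}$ zeroes and all $\deg_{X}p$ of its poles there, that degree is $N_{U}-N_{-}+\deg_{X}p$, and matching with Hermite's theorem on $\pp^{1}$ under complex conjugation fixes the overall sign. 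Finally, combining $n_{+}+n_{-}=2\deg_{X}p-n_{0}$ with this value of $n_{+}-n_{-}$, using the $\tau$-symmetry of the common zeroes (those in $X_{+}$ correspond under $\tau$ to those in $X_{-}$, and on $X_{0}$ all $N_{0}$ zeroes of $p$ are common) together with $\deg_{X}p=N_{U}+N_{0}+N_{-}$, and solving the resulting linear system for $n_{-}$, one obtains $n_{-}=\deg_{X}p+(N_{U}-N_{U}^{\mathrm{c}})$, where $N_{U}^{\mathrm{c}}$ is the number of zeroes of $p$ in $U$ shared with $p^{\tau}$. Thus $p$ has precisely $n_{-}-\deg_{X}p$ zeroes in $U$ beyond the common ones, which is the second assertion (and symmetrically $n_{+}$ counts the zeroes of $p$ in $X_{-}$ beyond the common ones).

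The real engine here is the general signature theorem; in reducing to it, the step I expect to be delicate is the normalization bookkeeping---ensuring that $-iJB_{\chi}(p,p^{\tau})$ has size $2\deg_{X}p$ rather than $\deg_{X}p$, which rests on recognizing that the pole divisors of $p$ and $p^{\tau}$ are disjoint (supported in $X_{-}$ and $X_{+}$ respectively) and hence add, and on correctly accounting for the twist by $\Delta$, for the flat bundle $\chi$, and for the signature matrix $J$ carried by the pole divisor. By contrast, the behavior at zeroes of $p$ on $X_{0}$---usually the sensitive point in Cauchy-index arguments---causes no trouble here, precisely because $q=p/p^{\tau}$ is smooth and unimodular on $X_{0}$.
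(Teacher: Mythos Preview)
Your overall strategy is the paper's: reduce to the signature/Cauchy-index machinery of Section~\ref{sec:main_result}, compute the winding number on $X(\R)$ via the argument principle on $X_+$, and solve the resulting linear system. The bookkeeping you flagged as delicate (size $2\deg_X p$, disjointness of the pole divisors) is fine and matches the paper exactly.

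There is, however, a real gap at the step ``apply the signature theorem to $(p,p^{\tau})$.'' Theorem~\ref{thm:signature}, Theorem~\ref{thm:from_sig_to_coh}, and Theorem~\ref{thm:argument} are all stated and proved for a pair of \emph{real} meromorphic functions $f,g$; their proofs use that $h=f/g$ is a real function, so that the fibers $(h)_\lambda$ over $\lambda\in\R$ are $\tau$-invariant divisors and the basis of Lemma~\ref{lem:fiber_basis} organizes into real points and conjugate pairs. The functions $p=p(\varphi)$ and $p^{\tau}$ are not real (indeed $\varphi$ itself is not real), and your quotient $q=p/p^{\tau}$ is unimodular on $X(\R)$ rather than real-valued, so none of those theorems applies to it directly; in particular ``the generalized Cauchy index of $q$'' is not the quantity computed in Theorem~\ref{thm:from_sig_to_coh}.

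The paper closes this gap exactly where you skipped: one passes to the real pair $p_r=\tfrac12(p+p^{\tau})$, $p_i=\tfrac{1}{2i}(p-p^{\tau})$ and uses the identity $B_{\chi}(p,p^{\tau})=2i\,B_{\chi}(p_r,p_i)$, so that $-\tfrac{i}{2}JB_{\chi}(p,p^{\tau})=JB_{\chi}(p_r,p_i)$ is the Hermitian matrix to which Theorem~\ref{thm:signature} applies. The real quotient is then $h=p_r/p_i=i\,\dfrac{1+F}{1-F}$ with $F=p^{\tau}/p$, i.e.\ the Cayley transform of your unimodular $q^{-1}$; this is Corollary~\ref{cor:cayley}, which converts the Cauchy index of $h$ into the winding number of $F$ that your argument-principle computation actually evaluates. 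Once you insert this reduction, your argument is exactly Corollary~\ref{cor:apply} and hence the paper's proof of Theorem~\ref{thm:count_quadrature}. Incidentally, the common zeroes of $(p_r,p_i)$ coincide with those of $(p,p^{\tau})$, so your kernel claim survives the passage intact.
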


\textbf{Structure of the paper} We start in Section \ref{sec:riemann_surfaces} by introducing the basic definitions and constructions to be used throughout the paper. In particular, we define compact Riemann surfaces of dividing type and the Cauchy kernel. In this section, we prove some basic properties of the Cauchy kernel and constructions associated with it.

Section \ref{sec:bezoutians} is the technical heart of the paper. In this section, we define and study Bezoutians $B_{\chi}(f,g)$ of two real meromorphic functions $f$ and $g$ on a compact Riemann surface $X$ of dividing type in full generality (compared to \cite{SV14} where due to the nature of the application, Bezoutians were used only for pairs of meromorphic functions with simple poles). We show that the Bezoutian is a $J$-Hermitian matrix, with respect to a matrix $J$ arising from the choice of a line bundle in the construction of the Cauchy kernel and the join of the pole divisors of the meromorphic functions $f$ and $g$. Then we proceed to study the sesquilinear form defined by $J B_{\chi}(f,g)$ and construct bases that give this bilinear form a particularly nice representation. The main theorem of this section (Theorem \ref{thm:signature}) determines the inertia and the signature of $J B_{\chi}(f,g)$ in terms of the topological data coming from the meromorphic function $h = f/g$,

Section \ref{sec:main_result} contains the main results of the paper. Theorem \ref{thm:from_sig_to_coh} states that the signature of $J B_{\chi}(f,g)$ is the Cauchy index of $h= f/g$, Theorem \ref{thm:argument} can be viewed as a generalization of the argument principle. It states that for every $\lambda \in \C$, such that $\im(\lambda) >0$, the signature of $J B_{\chi}(f,g)$ is the difference between the number of points in the fiber of $h$ over $\lambda$ that lie in the upper half-plane of $X$ and the number of points in this fiber that lie in the lower half-plane of $X$ (counting multiplicities). From these two theorems, we derive several corollaries, that will be used in applications.

The last section applies the theory developed in previous sections to the case of quadrature domains. We formulate and prove Theorem \ref{thm:count_quadrature} that allows us to count the number of zeroes of a given polynomial on a quadrature domain $U$ in terms of the inertia of the Bezoutian associated to $p$ with its conjugate on the Schottky double of $U$. Then we provide examples of concrete calculations in the genus $0$ case in subsection \ref{sec:genus_0}. Note that the question of stability of a polynomial even with respect to a bounded simply connected quadrature domain is non-trivial. The only exception is the unit disc, where one can use the classical Schur-Cohn theory. Therefore, even in this simple case, our main result is new.	

\textbf{Acknowledgments} The first author was partially supported by the Fields Institute for Research in the Mathematical Sciences. The first author also thanks Prof. Kenneth R. Davidson and the Department of Pure Mathematics at the University of Waterloo for their warm welcome and hospitality. The research of the second author was partially supported by the Deutsche Forschungsgemeinschaft (DFG) and the Israel Science Foundation (ISF).

\section{Riemann Surfaces and Cauchy Kernels} \label{sec:riemann_surfaces}

A Riemann surface $X$ is a complex manifold of complex dimension $1$. From now on we abbreviate 
``compact Riemann surface'' to simply ``Riemann surface''. The genus of a Riemann surface will be denoted by $g$, where the genus is the topological genus, i.e, the ``number of holes'' in $X$. In this paper, holomorphic line bundles on Riemann surfaces are a key tool. Hence we will now recall the basic notations and terminology of line bundles and  related topics. Recall that a line bundle on $X$ is a complex manifold $L$ together with a holomorphic projection $\pi \colon L \to X$, that is in addition locally trivial, i.e, for every $x \in X$, there exists an open neighborhood $x \in U \subset X$, such that $\pi^{-1}(U)$ is biholomorphic to $U \times \C$. In other words, a line bundle is a holomorphic family of one-dimensional complex vector spaces parametrized by $X$. We will consider the space of holomorphic sections of a line bundle $L$, namely the holomorphic functions $\sigma \colon X \to L$, such that $\pi \circ \sigma = \operatorname{id}_X$. We will denote the space of all holomorphic sections of $L$ by $H^0(X,L)$ or simply by $H^0(L)$, if $X$ is clear. Since $X$ is compact, it turns out that this space is finite dimensional and we will denote its dimension by $h^0(L)$. Several special examples of line bundles are of particular interest, The trivial line bundle is $X \times \C$ and the global sections are just global holomorphic functions on $X$, i.e, the constant functions. We will denote the trivial line bundle by $\cO$ and thus $H^0(X,\cO) = \C$ and $h^0(\cO) = 1$. The holomorphic cotangent bundles is another example of a holomorphic line bundle on $X$. We will denote it by $\Omega_X$. It is a theorem that $h^0(X,\Omega_X) = g$ or in other words, that there are $g$ linearly independent global holomorphic $1$-forms on $X$.

A morphism of line bundles from $(L_1,\pi_1)$ to $(L_2,\pi_2)$ is a holomorphic map $\varphi \colon L_1 \to L_2$, such that $\pi_2 \circ \varphi = \pi_1$ and on each fiber $\varphi$ induces a linear map. An isomorphism of line bundles is a morphism that has an inverse morphism. To each line bundle, we can associate the dual line bundle $L^{\vee}$, where each fiber is the dual space of the corresponding fiber of $L$. Thus, in particular, $\Omega_X^{\vee}$ is the holomorphic tangent bundle on $X$ and $\cO^{\vee} = \cO$. We can tensor two line bundles to obtain a new line bundle $L_1 \otimes L_2$. In particular, for every line bundle $L$, $L \otimes L^{\vee} \cong \cO$, and $L \otimes \cO \cong L$. This implies that the isomorphism classes of line bundles form a group with the operation being the tensor product and the unit element is the trivial bundle. This group is called the Picard group of $X$. The Serre dual of a line bundle $L$ is $L^{\vee} \otimes \Omega_X$. The celebrated Serre duality restricted to the case of compact Riemann surfaces is the statement that $H^1(X,L) = H^0(L^{\vee} \otimes \Omega_X)$. Here $H^1$ is the first sheaf cohomology group of the sheaf of sections of $L$, however, a reader unfamiliar with sheaf cohomology may take the above equality as the definition. In particular, we will denote $h^1(L) = h^0(L^{\vee} \otimes \Omega_X)$. For example $h^1(\cO) = g$ and $h^1(\Omega_X) = 1$. 

A divisor $D$ on a Riemann surface $X$ is an element of the free abelian group generated by the points of $X$, or in other words, a finite linear combination $D = \sum_{j=1}^m n_j p_j$, where $n_1,\ldots,n_m \in \Z$ and $p_1,\ldots,p_m \in X$. For example, if $f$ is a meromorphic function on $X$, then we can associate to $f$ the divisor $(f) = \sum_{j=1}^m \nu_{p_j}(f) p_j$, where each $p_j$ is either a zero or a pole of $f$ and $\nu_{p_j}(f)$ is the order of the zero/pole at $p_j$. The divisor $(f)$ is called a principal divisor. The principal divisors form a subgroup of the group of divisors on $X$. The degree of a divisor $D = \sum_{j=1}^m n_j p_j$ is $\deg D = \sum_{j=1}^m n_j$ and in particular, $\deg (f) = 0$. A divisor $D$ is called positive and denoted by $D \geq 0$, if each coefficient is non-negative. To each divisor, we can associate the line bundle $\cO(D)$, whose global sections are the meromorphic functions on $X$, such that $(f) + D \geq 0$. Two such line bundles are isomorphic if only if the divisors differ by a principal divisor. Hence we get a homomorphism from the group of divisors on $X$ modulo the principal divisor into the Picard group. It turns out that this map is an isomorphism since we can associate to each line bundle, a divisor up-to a principal divisor. A line bundle is called flat if the degree of the associated divisor is $0$. We refer the reader to the excellent books \cite{FarKra92,Gun66,Gun72,Mir95} for further details on these topics.

A real Riemann surface $X$ is a Riemann surface equipped with an anti-holomorphic involution $\tau$. We will denote by $X(\R)$ the fixed points of $\tau$ and call such points the real points of $X$. We say that $X$ is dividing if $X \setminus X(\R)$ has two connected components interchanged by $\tau$ or alternatively if $X/\tau$ is orientable. In this case, we fix one component and denote it $X_{+}$ and we fix an orientation on $X(\R)$, such that $X(\R)$ becomes the boundary of $X_{+}$. We will write $X_{-}$ for the other component. The anti-holomorphic involution acts on functions by $f^{\tau}(p) = \overline{f(p^{\tau})}$ and on line bundles on $X$ (more generally on sheaves on $X$). We say that a function is real if $f = f^{\tau}$.

Given a real Riemann surface $X$, in \cite{Vin93} the second author has constructed a canonical integral homology basis on $X$. The basis has the property that it is a symplectic basis with respect to the intersection pairing and the matrix representing $\tau$ with respect to this basis is of the form $\left( \begin{smallmatrix} I & 0 \\ H & -I \end{smallmatrix} \right)$. For details and a precise description of $H$, the reader is referred to \cite[Proposition 2.2]{Vin93} and the discussion following it. We will denote the basis by $A_1,\ldots,A_g,B_1,\ldots,B_g \in H_1(X,\Z)$. We fix a basis $\omega_1, \ldots,\omega_g \in H^0(X,\Omega_X)$ for the space of holomorphic differentials on $X$ normalized with respect to the canonical homology basis, in the sense that $\int_{A_i} \omega_j = \delta_{ij}$. With this data at hand, for a fixed point $x_0 \in X$, the Abel-Jacobi map is $\varphi(x) = \left( \begin{smallmatrix} \int_{x_0}^x \omega_1, \ldots, \int_{x_0}^x \omega_g \end{smallmatrix} \right)$. The Abel-Jacobi map maps $X$ into the Jacobian variety of $X$, $J(X) = \C^g/(\Z^g + \Lambda \Z^g)$, where $\Lambda$ is the $B$-period matrix of the basis of differentials chosen above. The Jacobian is a complex torus and thus, in particular, is an abelian group. Therefore, one extends the Abel-Jacobi map to the divisors on $X$, i.e., formal integral combinations of points on $X$. The involution $\tau$ extends linearly to divisors on $X$. Due to the choices that we have made the differentials $\omega_1,\ldots,\omega_g$ are real and the complex conjugation on $\C^g$ preserves the lattice generated by the columns of the period matrix and thus induces an anti-holomorphic involution on the Jacobian of $X$, that we shall also denote by $\tau$. Using this, we can deduce that for every divisor $D$, $\varphi(D^{\tau}) = \varphi(D)^{\tau} + \deg D \cdot \varphi(x_0^{\tau})$. In particular, if the base point is real, then $\varphi$ intertwines the involutions. i.e, it is a real map.

Recall that the Riemann theta function of $X$ is a function on $\C^g$:
\[
\theta(z) = \sum_{m \in \Z^g} \exp\left( 2\pi i \left( \frac{1}{2} m^T \Lambda m + m^T z\right) \right).
\]
Here $m^T$ stands for the transpose. Furthermore, one defines the Riemann theta function with characteristics $a,b \in \C^g$ as follows:
\[
\theta\left[ \begin{smallmatrix} a \\ b \end{smallmatrix}\right](z) = \exp\left(2 \pi i \left( \frac{1}{2} a^T \Lambda a + a (z +b)^T \right) \right) \theta(z + a \Lambda + b).
\]

Let us fix a line bundle of half-order differentials $\Delta$, such that $\varphi(\Delta) = - \kappa$, where $\kappa$ is the Riemann constant. That is $\Delta$ is a line bundle, such that $\Delta \otimes \Delta = \Omega_X$ and $\kappa \in J(X)$ is the vector defined by the equation
\[
\sum_{j=1}^g \int_{x_0}^{x_j} \omega_m = e_m - \kappa_m ( \operatorname{mod} \Z^g + \Lambda \Z^g).
\]
Here $x_1,\ldots,x_g$ are the zeroes of the Riemann theta function restricted to the image of $\varphi$. Let $\chi$ be a flat line bundle on $X$, such that $h^0(\chi \otimes \Delta) = h^1(\chi \otimes \Delta) = 0$. Let $\rho_1,\rho_2 \colon X\times X \to X$, be the projections on the first and second coordinate, respectively. In \cite{BV-ZPF} Ball and the second author have constructed the Cauchy kernel associated to $\chi$. Namely, $K_{\chi}$ is a global section of $\rho_1^*(\chi \otimes \Delta) \otimes \rho_2^*(\chi^{\vee} \otimes \Delta) \otimes \cO_{X \times X}(D_X)$, where $D_X$ is the divisor of the diagonal in the product. The uniqueness of $K_{\chi}$ stems from the fact that the residue along the diagonal map is an isomorphism from $H^0(\rho_1^*(\chi \otimes \Delta) \otimes \rho_2^*(\chi^{\vee} \otimes \Delta) \otimes \cO_{X \times X}(D_X))$ to $\Hom(\chi \otimes \Delta, \chi \otimes \Delta)$ (it takes $K_{\chi}$ to the identity map on $\chi \otimes \Delta$). One can write $K_{\chi}$ in terms of the theta function with characteristic and the prime form on $X$, as follows:
\[
K_{\chi}(p,q) = \frac{\theta \left[ \begin{smallmatrix} a \\ b \end{smallmatrix}\right]\left(\varphi(q) -\varphi(p)\right)}{\theta\left[ \begin{smallmatrix} a \\ b \end{smallmatrix}\right] \left(0\right) E_{\Delta}(q,p)}.
\]
Here $\varphi(\chi) = (a,b + \Lambda a)$ and $E_{\Delta}$ is the prime form associated with $\Delta$ (see \cite{Fay73}).

Let $\pi \colon \tilde{X} \to X$ be the universal cover of $X$ and denote by $\tilde{x} \in \tilde{X}$ a lift of $x \in X$. It is a well-known fact that the fundamental group of $X$ acts on $\tilde{X}$ and preserves the fibers. Choosing coordinates $t$ and $s$ around $p, q\in X$ and the positive branch of the square root on the coordinate patches, we have that:
\[
\frac{K_{\chi}(T \tilde{p},R \tilde{q})}{\sqrt{dt}(T \tilde{p}) \sqrt{ds}(R \tilde{q})} = a_{\chi}(T) \frac{K_{\chi}(\tilde{p},\tilde{q})}{\sqrt{dt}(\tilde{p}) \sqrt{ds}(\tilde{q})} a_{\chi}(R)^{-1}.
\]
Here $T,R \in \pi_1(X)$ and $a_{\chi}$ is the factor of automorphy of $\chi$.

Given an effective divisor $D$ on $X$, let $\cL = \cO_X(D) \otimes \chi \otimes \Delta$. By Riemann-Roch we know that $h^0(\cL) - h^1(\cL) = \deg D$. By Serre duality $h^1(\cL) = h^0(\cL^{\vee} \otimes \Omega_X) = h^0(\cO_X(-D) \times \chi^{\vee} \otimes \Delta).$ Note that since $D$ is effective and $h^0(\chi^{\vee} \otimes \Delta) = 0$ we conclude that $h^1(\cL) = 0$ and thus $h^0(\cL) = \deg D$. In fact, we can consrtuct a basis of $H^0(X,\cL)$ using $K_{\chi}$. To do this write $D = \sum_{j=1}^r n_j p_j$ and choose a local coordinate $t_j$ for every $p_j$. Choose a lift $\tilde{p}_j \in \tilde{X}$ for every $j=1,\ldots,r$. Now consider the meromorphic section of $\chi \otimes \Delta$ obtained by $\mu_{j,0}(p) = \frac{K_{\chi}(p, \tilde{p}_j)}{\sqrt{dt_j}(\tilde{p}_j)}$. As we have seen above changing the choice of the lift will only multiply this section by a non-zero scalar (since $\chi$ is flat). Choosing an open neighborhood $p_j \in U_j \subset X$, such that $\pi^{-1}(U_j)$ is homeomorphic to a disjoint union of copies of $U_j$, we can choose the lift $\tilde{p}_j$ consistently and consider the derivative of $\mu_{j,0}$ as a function of the second coordinate. Note that changing $\tilde{p}_j$ to $T \tilde{p}_j$ for some $T \in \pi_1(X)$, will multiply the derivative again by $a_{\chi}(T)$,  which is a non-zero scalar independent of $p_j$. Thus we can obtain $\mu_{j,k}(p)$ as the derivative of $\mu_{j,0}$ of order $k$. Since locally
\[
K_{\chi}(\tilde{p},\tilde{p}_j) = \frac{1}{t_j(\tilde{p}) - t_j(\tilde{p}_j)} + \text{ analytic terms},
\]
we see that $\mu_{j,k}$ is a section of $\chi \otimes \Delta$, with a unique pole at $p_j$ of order $k+1$. Thus the set $\left\{ \mu_{j,k} \mid j=1,\ldots,r,\, k=0,\ldots,n_j-1\right\}$ is a basis of $H^0(X,\cL)$. Similarly, if we write $\cR = \cO_X(D) \otimes \chi^{\vee} \otimes \Delta$ we obtain that $h^0(\cR) = \deg D$ and $h^1(\cR) = 0$. Specializing the Cauchy kernel at the first point, we can construct a basis for $H^0(X,\cR)$. We denote $\nu_{j,0}(p) = \frac{K_{\chi}(\tilde{p}_j,p)}{\sqrt{dt_j}(\tilde{p}_j)}$ and taking derivatives in the first coordinate we obtain the basis $\left\{\nu_{j,k} \mid j=1,\ldots,r,\, k =0,\ldots,n_j-1\right\}$.

Assume now that $X$ is a real Riemann surface of dividing type. Let $k$ be the number of connected components of $X(\R)$. The Jacobian of $X$ contains a set of real tori, parametrized by a choice of signs $v= (v_1,\ldots,v_{k-1}) \in \{0,1\}^{k-1}$ defined by:
\begin{multline*}
\T_v = \left\{\zeta \in J(X) \mid \zeta = \frac{v_1}{2} e_{g+ 2 - k} + \cdots + \frac{v_{k-1}}{2} e_g +  a_1 \left( \Lambda_1 - \frac{1}{2} e_2 \right) + a_2 \left( \Lambda_2 - \frac{1}{2} e_1 \right) \right.\\ + \left.  \cdots + a_{g- k} \left(\Lambda_{g-k} - \frac{1}{2} e_{g-k+1} \right) + a_{g+1 - k} \left( \Lambda_{g+1-k} - \frac{1}{2} e_{g-k} \right) \right.\\ + \left. a_{g+2 - k} \Lambda_{g+2-k} + \cdots + a_g \Lambda_g,\, (a_1,\ldots,a_g) \in \R^g\right\}.
\end{multline*}
Here $\Lambda_j$ are the columns of the period matrix $\Lambda$ and $e_1,\ldots,e_g$ is the standard basis of $\C^g$. If $\chi \in \T_v$, then $\chi \otimes \Delta$ has a special property that $\left(\chi \otimes \Delta \right)^{\tau} = \chi^{\vee} \otimes \Delta$. By \cite[Lemma 5.4]{SV14} for $\chi \in \T_v$ we know that for every two distinct $p,q \in X$, we have $\overline{K_{\chi}(p,q)} = K_{\chi}(q^{\tau}, p^{\tau})$. A divisor on $X$ is called real if $D = D^{\tau}$ and since $X$ is dividing, we can always write a real divisor $D = D_r + D_i + D_i^{\tau}$, where the support of $D_r$ is in $X(\R)$ and the support of $D_i$ is in $X_{+}$. Let $\cL =\cO_X(D) \otimes \chi \otimes \Delta$ again and recall that we have obtained a basis for $H^0(X,\cL)$ by using the Cauchy kernel. The basis depends on the choice of the lift $\tilde{p}_j$. We will be more specific choosing the lifts in the real case and for every $p_j$ in the support of $D_i$ we choose a lift $\tilde{p}_j$ and set $\tilde{p}^{\tau}_j$ for the lift of $p^{\tau}_j$.

\begin{lem} \label{lem:duality}
Let $X$ be of dividing type and $\chi \in \T_v$ and $D$ be a real divisor. Write $D = D_r + D_i + D_i^{\tau}$, with $D_r = \sum_{j=1}^{\ell} n_j p_j$ and $D_i = \sum_{j=\ell+1}^{r} n_j p_j$. Choose lifts $\tilde{p}_j$ as above. Let us write $\mu_{j^{\tau},k}$ and $\nu_{j^{\tau},k}$ for the $\mu$ and $\nu$ basis elements associated to $p_j^{\tau}$, for $j=\ell+1,\ldots,r$.

Then we have that for $j=0,\ldots,\ell$ and $k=0,\ldots,n_j-1$
\[
\overline{\nu_{j,k}} = - \sigma_{v,j} \mu_{j,k},
\]
here $\sigma_{v,j}$ is a sign that depends only on $v$ and the point $p_j$. Furthermore, for $j=\ell+1,\ldots,r$ nad $k = 0,\ldots,n_j-1$:
\[
\overline{\nu_{j,k}} = - \mu_{j^{\tau},k},\quad \overline{\nu_{j^{\tau},k}} = - \mu_{j,k}.
\]
\end{lem}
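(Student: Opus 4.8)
The plan is to reduce the statement to the case $k=0$, where it becomes essentially a reformulation of the conjugation symmetry of the Cauchy kernel, and then to pass to general $k$ by differentiation in the source variable. The key input is \cite[Lemma 5.4]{SV14}: for $\chi \in \T_v$ one has $\overline{K_\chi(p,q)} = K_\chi(q^\tau,p^\tau)$ for distinct $p,q$, which I would use together with the automorphy relation for $K_\chi$ recorded in Section \ref{sec:riemann_surfaces} and the local expansion $K_\chi(\tilde p,\tilde q) = (t(\tilde p)-t(\tilde q))^{-1} + (\text{analytic})$. Before computing anything I would fix compatible data: for each $p_j$ in the support of $D_r$ a local coordinate $t_j$ with $t_j\circ\tau = \overline{t_j}$ (so $p_j\in X(\R)$ corresponds to a real $t_j$); for each $p_j$ in the support of $D_i$ the local coordinate at $p_j^\tau$ taken to be $t_{j^\tau} := \overline{t_j\circ\tau}$; and a lift of $\tau$ to an involution of $\tilde X$ with $\tilde p_j^\tau := \tau(\tilde p_j)$ and $\widetilde{p^\tau} := \tau(\tilde p)$, so that $(\widetilde{p^\tau})^\tau = \tilde p$.

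For $k=0$ I would simply unwind the definitions. Writing $\overline{\nu_{j,0}}(p) = \overline{\nu_{j,0}(p^\tau)} = \overline{K_\chi(\tilde p_j,\widetilde{p^\tau})}\big/\overline{\sqrt{dt_j}(\tilde p_j)}$ and applying the conjugation symmetry of $K_\chi$ — transported to $\tilde X$ at the cost of a product of automorphy factors of $\chi$ — rewrites the numerator as a scalar multiple of $K_\chi(\tilde p,\tilde p_j^\tau)$. Comparing the remaining normalization $\overline{\sqrt{dt_j}(\tilde p_j)}$ with $\sqrt{dt_{j^\tau}}(\tilde p_j^\tau)$, which the half-order differential bundle $\Delta$ and the prime-form/theta expression of $K_\chi$ control up to a scalar of modulus one, and keeping track of the sign produced by interchanging the two arguments in the local expansion $(t(\tilde p)-t(\tilde q))^{-1}$, one identifies $\overline{\nu_{j,0}}$ with $\mu_{j^\tau,0}$ up to an explicit constant. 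When $p_j$ lies in the support of $D_i$ the chosen lifts make all automorphy contributions trivial and the constant is exactly $-1$, giving $\overline{\nu_{j,0}} = -\mu_{j^\tau,0}$; the companion identity $\overline{\nu_{j^\tau,0}} = -\mu_{j,0}$ is the same computation with $j$ and $j^\tau$ interchanged, using $\tau^2 = \operatorname{id}$ and the compatibility of the lift choices. When $p_j\in X(\R)$ we have $p_j^\tau = p_j$, so $\tilde p_j^\tau = T_j\tilde p_j$ for a deck transformation $T_j$, and the extra automorphy factor $a_\chi(T_j)$ appears; because $\chi\in\T_v$ this factor is a sign determined by $v$ and $p_j$, and absorbing it (together with the modulus-one comparison factor, which for a real coordinate is likewise $\pm1$) into $\sigma_{v,j}$ yields $\overline{\nu_{j,0}} = -\sigma_{v,j}\mu_{j,0}$.

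To promote this to arbitrary $k$ I would use that $\mu_{j,k}$ and $\nu_{j,k}$ arise from $\mu_{j,0}$ and $\nu_{j,0}$ by applying $(\partial/\partial t_j)^k$ in the source variable and evaluating at $\tilde p_j$. Since the $k=0$ identities hold at the level of the defining kernels — that is, with the source point ranging over a neighbourhood of $\tilde p_j$ matched to a neighbourhood of $\tilde p_j^\tau$ via $\tau$ — and since complex conjugation intertwines $(\partial/\partial t_j)^k$ with $(\partial/\partial\overline{t_j})^k = (\partial/\partial t_{j^\tau})^k$ (where $t_{j^\tau}$ is exactly the coordinate used to define $\mu_{j^\tau,k}$, and equals $t_j$ for a real point), applying $\partial^k$ to both sides reproduces the very same constant; this yields all the asserted identities for general $k$.

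The step I expect to be the real work is the constant-tracking in the $k=0$ case, which needs care on three fronts: (i) exactly which automorphy factors of $\chi$ are picked up when the symmetry $\overline{K_\chi(p,q)} = K_\chi(q^\tau,p^\tau)$ is lifted to the universal cover; (ii) the behaviour of $\sqrt{dt_j}$ under conjugation, which is dictated by the half-order differential bundle $\Delta$ and is cleanest through the theta/prime-form formula for $K_\chi$; and (iii) verifying that for $\chi\in\T_v$ the automorphy factor $a_\chi(T_j)$ attached to the deck transformation identifying $\tilde p_j$ with $\tilde p_j^\tau$ is $\pm1$ and depends only on $v$ and $p_j$ — this is precisely where membership in the real torus $\T_v$ enters, via its explicit description recalled above. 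Granting these, the two displayed families of identities follow.
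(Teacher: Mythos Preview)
Your approach is essentially the same as the paper's: establish the $k=0$ case from the conjugation symmetry of $K_\chi$ and the choice of lifts, then pass to general $k$ by differentiating in the source variable and noting that the constant (coming from the automorphy factor of $\chi$) is independent of $k$. The only difference is that for the real-point $k=0$ case the paper invokes \cite[Corollary 5.5]{SV14} directly rather than rederiving it from \cite[Lemma 5.4]{SV14} as you propose; your constant-tracking discussion is precisely the content of that corollary.
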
 
\begin{proof}
By \cite[Corollary 5.5]{SV14} we know that the first equality holds for $k=0$. However, the fact that it holds for the derivatives is immediate, since the sign comes from the constant automorphy factor of $\chi$. The second equality is immediate from the properties of $K_{\chi}$ and our choice of lifts.
\end{proof}

We can reformulate the statement of the lemma above in a more compact way. Let $m = \deg D_i$ and set $J_v = I_v \oplus \left( \begin{smallmatrix} 0 & 1 \\ 1 & 0 \end{smallmatrix} \right)^{\oplus m}$, where $I_v$ is a diagonal matrix of size $\deg D_r$, with the signs $\sigma_{v,j}$ on the diagonal. We define the column and row vectors of sections:
\[
\buc = \begin{pmatrix} \nu_{1,0} & \cdots & \nu_{1,n_1-1} &\cdots & \nu_{\ell,n_{\ell}-1} & \nu_{\ell+1,0} & \nu_{(\ell+1)^{\tau},0} & \cdots &\nu_{r,n_r-1} & \nu_{r^{\tau},n_r-1} \end{pmatrix}^T.
\]
\[
\bucl = \begin{pmatrix} \mu_{1,0} & \cdots & \mu_{1,n_1-1} &\cdots & \mu_{\ell,n_{\ell}-1} & \mu_{\ell+1,0} & \mu_{(\ell+1)^{\tau},0} & \cdots &\mu_{r,n_r-1} & \mu_{r^{\tau},n_r-1} \end{pmatrix}
\]
Then we can write:
\begin{cor} \label{cor:u_times_star}
Let $X$ be  real Riemann surface of dividing type, $\chi \in \T_v$ and $D$ a real effective divisor on $X$, then:
\[
\bucl(p) = - \left(J_v \buc(p^{\tau}) \right)^*
\]
\end{cor}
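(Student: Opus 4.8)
The plan is to obtain the identity slot-by-slot directly from Lemma~\ref{lem:duality}, since Corollary~\ref{cor:u_times_star} is just a matrix repackaging of that lemma together with the definition of $J_v$. First I would pin down the meaning of the conjugation bar on sections: for a section $s$ of $\chi\otimes\Delta$ or $\chi^{\vee}\otimes\Delta$ one has $\overline{s}(p)=\overline{s(p^{\tau})}$, so that the operation $*$ in the statement --- entrywise conjugation together with transposition --- applied to a column of sections \emph{evaluated at $p^{\tau}$} returns a row whose entries are the bar-conjugates of those sections evaluated at $p$. With this convention, evaluating $-\left(J_v\,\buc(p^{\tau})\right)^{*}$ amounts to: apply $J_v$ to the column $\buc(p^{\tau})$, conjugate each entry, transpose to a row, and negate; it then suffices to compare this row with $\bucl(p)$ in each block of $J_v=I_v\oplus\left(\begin{smallmatrix}0&1\\1&0\end{smallmatrix}\right)^{\oplus m}$.

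Next I would split according to the two block types. On the diagonal block $I_v$, whose entry in the slot of $\nu_{j,k}$ (for $j=1,\dots,\ell$, $k=0,\dots,n_j-1$) is the sign $\sigma_{v,j}$, the recipe above produces in that slot $-\sigma_{v,j}\,\overline{\nu_{j,k}(p^{\tau})}=-\sigma_{v,j}\,\overline{\nu_{j,k}}(p)$, which by the first identity of Lemma~\ref{lem:duality} equals $-\sigma_{v,j}\,(-\sigma_{v,j}\,\mu_{j,k}(p))=\sigma_{v,j}^{2}\,\mu_{j,k}(p)=\mu_{j,k}(p)$ since $\sigma_{v,j}=\pm1$ --- exactly the matching entry of $\bucl(p)$. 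On each antidiagonal block $\left(\begin{smallmatrix}0&1\\1&0\end{smallmatrix}\right)$, which occupies the slots of a consecutive pair $(\nu_{j,k},\nu_{j^{\tau},k})$ with $j\in\{\ell+1,\dots,r\}$, the block first swaps the two entries, so after conjugation, transposition and negation the pair becomes $\bigl(-\overline{\nu_{j^{\tau},k}}(p),\,-\overline{\nu_{j,k}}(p)\bigr)$; by the second pair of identities in Lemma~\ref{lem:duality} this is $\bigl(\mu_{j,k}(p),\,\mu_{j^{\tau},k}(p)\bigr)$, again precisely the two matching entries of $\bucl(p)$. Concatenating all blocks yields $\bucl(p)=-\left(J_v\,\buc(p^{\tau})\right)^{*}$.

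The only genuine care required --- and the step I expect to be the main (mild) obstacle --- is the bookkeeping of the indexing: one must check that the ordering of the basis sections listed in $\buc$ and $\bucl$ is matched slot-for-slot with the block decomposition of $J_v$, i.e.\ that $I_v$ carries $n_j$ consecutive copies of $\sigma_{v,j}$ over the block $\nu_{j,0},\dots,\nu_{j,n_j-1}$, and that each $2\times2$ antidiagonal block pairs $\nu_{j,k}$ with $\nu_{j^{\tau},k}$ for the \emph{same} $k$. Once the indexing is fixed, no input beyond Lemma~\ref{lem:duality} is needed; in particular the derivative cases $k\ge1$ are already covered there, since the relevant sign originates from the constant factor of automorphy of the flat bundle $\chi$ and is therefore unaffected by differentiation in the variable.
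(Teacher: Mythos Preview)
Your proof is correct and follows exactly the approach the paper intends: the corollary is stated immediately after Lemma~\ref{lem:duality} with no separate proof, precisely because it is the slot-by-slot repackaging you carry out, using the block structure of $J_v$ and the identities of the lemma. Your handling of both the diagonal (real) and antidiagonal (complex-conjugate pair) blocks is accurate, and your caution about the indexing is well placed but ultimately unproblematic given the definitions of $\buc$, $\bucl$, and $J_v$.
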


\section{Bezoutians on Riemann Surfaces} \label{sec:bezoutians}

Let $X$ be a Riemann surface $D = \sum_{j=1}^m n_j p_j$  be an effective real divisor of degree $N = \sum_{j=1}^m n_j$ on $X$ and let $f, g \in \cL(D)$ be two meromorphic functions.  We consider the following expression for two distinct points $p,q \in X$:
\[
b_{\chi}(f,g)(p,q) = (f(p) g(q) - f(q) g(p) ) K_{\chi}(p,q).
\]

Then we have the following theorem that generalizes \cite[Prop.\ 4.1]{SV14}:

\begin{prop} \label{prop:bezoutian_formula}
There exists a matrix $B_{\chi}(f,g) \in M_N(\C)$, such that
\[
b_{\chi}(f,g) = \bnu(p) B_{\chi}(f,g) \bmu(p),
\]
where $\bnu$ and $\bmu$ are the row and column vectors consisting of the basis elements $\nu_{j,k}$ and $\mu_{j,k}$, respectively. This defines a linear mapping $\wedge^2 \cL(D) \to M_N(\C)$ given by $f \wedge g \mapsto B_{\chi}(f,g)$. 

Additionally, if $X$ is a real Riemann surface of dividing type, $D = D_r + D_i + D_i^{\tau}$ is a real divisor and $f$ and $g$ are real meromorphic functions, then if $\chi \in \T_v$, then we set $B_{\chi}(f,g) \in M_N(\C)$, such that:
\[
b_{\chi}(f,g)(p,q) = \bucl(p) B_{\chi}(f,g) \buc(q) = - \buc(p^{\tau})^* J_v B_{\chi}(f,g) \buc(q).
\]
The matrix $B_{\chi}(f,g)$ thus defined is $J_v$-Hermitian.
\end{prop}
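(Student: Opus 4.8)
The plan is to derive the $J_v$-Hermitian property from a single conjugation symmetry of the bilinear expression $b_\chi(f,g)$, together with Corollary~\ref{cor:u_times_star} and the transformation law of the Cauchy kernel under $\tau$. Existence of $B_\chi(f,g)$ in the dividing case needs no separate argument: the vectors $\bucl$ and $\buc$ are obtained from the $\mu$- and $\nu$-bases of the first part of the proposition by a fixed permutation of entries (the one that pairs each $p_j$ in the support of $D_i$ with $p_j^\tau$), so the matrix here is the one produced in the first part, conjugated by these permutations; uniqueness again follows from the linear independence of the $\mu_{j,k}$ and of the $\nu_{j,k}$.

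First I would record the key identity: for distinct $p,q\in X$,
\[
\overline{b_\chi(f,g)(p,q)} = b_\chi(f,g)(q^\tau,p^\tau).
\]
This is immediate from the definition $b_\chi(f,g)(p,q) = (f(p)g(q)-f(q)g(p))K_\chi(p,q)$: since $f$ and $g$ are real one has $\overline{f(p)} = f(p^\tau)$ and $\overline{g(q)} = g(q^\tau)$, so the bracket conjugates to $f(p^\tau)g(q^\tau)-f(q^\tau)g(p^\tau)$; and since $\chi\in\T_v$ the identity $\overline{K_\chi(p,q)} = K_\chi(q^\tau,p^\tau)$ of \cite[Lemma~5.4]{SV14} takes care of the kernel factor. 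Multiplying the two gives the claim.

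Next I would substitute the two available representations. On one side, $b_\chi(f,g)(q^\tau,p^\tau) = \bucl(q^\tau)\,B_\chi(f,g)\,\buc(p^\tau)$. On the other side, $b_\chi(f,g)(p,q)$ is a scalar, hence equal to its conjugate transpose, so $\overline{b_\chi(f,g)(p,q)} = \buc(q)^*\,B_\chi(f,g)^*\,\bucl(p)^*$. Now apply Corollary~\ref{cor:u_times_star}: since $J_v$ is a real symmetric involution ($J_v^*=J_v=J_v^{-1}$), it gives $\bucl(p)^* = -J_v\,\buc(p^\tau)$ and, evaluating the corollary at $q^\tau$, $\buc(q)^* = -\bucl(q^\tau)\,J_v$. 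Substituting, the symmetry identity becomes
\[
\bucl(q^\tau)\, J_v B_\chi(f,g)^* J_v\, \buc(p^\tau) = \bucl(q^\tau)\, B_\chi(f,g)\, \buc(p^\tau)
\]
for all $p,q$. Since the entries of $\bucl$ are linearly independent (they are a reordering of the basis $\{\mu_{j,k}\}$ of $H^0(\cO_X(D)\otimes\chi\otimes\Delta)$) and likewise for $\buc$, such an identity forces $J_v B_\chi(f,g)^* J_v = B_\chi(f,g)$; multiplying on the left by $J_v$ yields $B_\chi(f,g)^* J_v = J_v B_\chi(f,g)$, i.e. $(J_v B_\chi(f,g))^* = J_v B_\chi(f,g)$, which is precisely the assertion that $B_\chi(f,g)$ is $J_v$-Hermitian.

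The one point I expect to require care is the very first step: one must check that ``complex conjugation'' of the line-bundle-valued section $b_\chi(f,g)(p,q)$ is interpreted compatibly with the conjugations appearing in Lemma~\ref{lem:duality} and in \cite[Lemma~5.4]{SV14}, so that the conjugation of the bracket and of the kernel can legitimately be combined. This compatibility is exactly what the hypothesis $\chi\in\T_v$ (giving $(\chi\otimes\Delta)^\tau = \chi^\vee\otimes\Delta$) is there to guarantee. Everything after that is formal manipulation with the involution $J_v$ and the adjoint, using only $J_v^2=I$ and $J_v^*=J_v$; the passage from the bilinear identity to the matrix identity is routine given that the $\mu_{j,k}$ and the $\nu_{j,k}$ are bases, a fact established earlier.
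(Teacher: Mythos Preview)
Your argument for the $J_v$-Hermitian property is correct and is essentially the same as the paper's: both hinge on the conjugation symmetry $\overline{b_\chi(f,g)(p,q)} = b_\chi(f,g)(q^\tau,p^\tau)$ and then read off the matrix identity $J_v B_\chi(f,g)^* J_v = B_\chi(f,g)$ from the two basis representations, using linear independence of the $\mu_{j,k}$ and $\nu_{j,k}$. The only cosmetic difference is that the paper works directly with the form $b_\chi(f,g)(p,q) = -\buc(p^\tau)^* J_v B_\chi(f,g)\buc(q)$, whereas you start from $\bucl(p) B_\chi(f,g)\buc(q)$ and invoke Corollary~\ref{cor:u_times_star} explicitly; these are the same computation. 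One omission worth noting: the paper does spell out the existence of $B_\chi(f,g)$ in the general (non-real) case via the K\"unneth decomposition of $H^0\big(\pi_1^*(\cO_X(D)\otimes\chi\otimes\Delta)\otimes\pi_2^*(\cO_X(D)\otimes\chi^\vee\otimes\Delta)\big)$, which you take for granted; if you intend a self-contained proof of the full proposition you should include that step.
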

\begin{proof}
Let $D_X \subset X \times X$ be the diagonal. Let $\pi_1$ and $\pi_2$ be the projections on the first and second coordinates, respectively. If $f$ and $g$ are viewed as sections of $\cO_X(D)$ and thus one can think of $b_{\chi}(f,g)$ as a section of $\pi_1^*\left( \cO_X(D) \otimes \chi \otimes \Delta\right) \otimes \pi_2^*\left( \cO_X(D) \otimes \chi^{\vee} \otimes \Delta \right)$. To see this note that $f(p) g(q) - f(q) g(p)$ is a section of $\pi_1^*\left( \cO_X(D) \right) \otimes \pi_2^* \left( \cO_X(D) \right) \otimes \cO_{X\times X}(-D_X)$, since it vanishes on $D_X$. We know also that $K_{\chi}$ is $\pi_1^*\left( \chi \otimes \Delta \right) \otimes \pi_2^* \left( \chi^{\vee} \otimes \Delta \right) \otimes \cO_{X\times X}(D_X)$. Hence $b_{\chi}$ is a section of the above stated bundle. By K\"{u}nneth formula we know that:
\begin{multline*}
H^0 \left(X \times X, \pi_1^*\left( \cO_X(D) \otimes \chi \otimes \Delta\right) \otimes \pi_2^*\left( \cO_X(D) \otimes \chi^{\vee} \otimes \Delta \right)\right) =\\ H^0\left(X,  \cO_X(D) \otimes \chi \otimes \Delta\right) \otimes H^0 \left(X, \cO_X(D) \otimes \chi^{\vee} \otimes \Delta \right).
\end{multline*}
The matrix $B_{\chi}(f,g)$ is thus obtained from the coefficients of the section with respect to the basis $\mu_{j,k} \otimes \nu_{j^{\prime},k^{\prime}}$. By the definition of $b_{\chi}$ it is immediate that the map $f \wedge g \to B_{\chi}(f,g)$ is well defined and linear.

Now assume that $X$ is real and of dividing type, $D$ is a real divisor and $f$ and $g$ are real meromorphic functions. Then the second equality follows immediately from Corollary \ref{cor:u_times_star}. To see that $B_{\chi}(f,g)$ is $J_v$-Hermitian, we need to prove that $B_{\chi}(f,g)^* = J_v B_{\chi}(f,g) J_v$. Note that from the properties of the Cauchy kernel and the fact that $f$ and $g$ are real meromorphic functions, we get that for every two distinct $p,q \in X$ not in the support of $D$, the equality $\overline{b_{\chi}(f,g)(p,q)} = b_{\chi}(f,g)(q^{\tau},p^{\tau})$. Using this fact for two distinct points not in the support of $D_r$ we get:
\[
 \buc(q)^* B_{\chi}(f,g)^* J_v \buc(p^{\tau}) = \overline{\buc(p^{\tau})^* J_v B_{\chi}(f,g) \buc(q)} = \buc(q)^* J_v B_{\chi}(f,g) \buc(p^{\tau})
\]
Since this is an expression of the same global section with respect to a basis we get the desired equality.
\end{proof}

\begin{rem}
Let $D$ be a divisor on $X$ and $f, g \in \cL(D)$ two meromorphic functions. As in \cite{SV14} one can construct the Bezoutian of $f$ and $g$ using Laurent series expansions in local coordinates. Let $D = \sum_{j=1}^d m_j p_j$ and fix a local coordinate $t_j$ centered at each $p_j$. Let us write $f(t_j(p)) = \sum_{n= -m_j}^{\infty} a_{j,n} t_j(p)^n$ and $g(t_j(p)) = \sum_{n=-m_j}^{\infty} b_{j,n} t_j(p)^n$. Let us write $B_{j,j',k,k'}$ for the entry of $B_{\chi}(f,g)$ corresponding to $\mu_{j,k} \otimes \nu_{j',k'}$. Since $\mu_{j,k}$ has a pole of order $k$ at $j$ with residue one, we see immediately, that what we need is the coefficient of $t_j(p)^{-k} t_{j'}(q)^{-k'}$ in the power series expansion of $b_{\chi}(p,q)$. To this end, we may expand the Cauchy kernel using the coordinate $(t_j,t_j')$ around the origin. If $j \neq j'$ the Cauchy kernel is analytic in a neighborhood of the origin and thus the series will not contain any negative terms, but might still affect some terms of the expansion. It is, however, immediate in this case that $B_{j,j',m_j,m_j'} = \left(a_{j,-m_j} b_{j',-m_{j'}} - a_{j',-m_{j'}} b_{j,-m_j}\right) \frac{K_{\chi}(p_j,p_{j'})}{\sqrt{dt_j}(p_j)\sqrt{dt_{j'}}(p_{j'})}$.
Now assume that $j = j'$. Consider a small open neighborhood of $(p_j,p_j)$ in $X \times X$ trivializing the line bundles we see that $K_{\chi}(p,q) - \frac{1}{p-q}$ is an analytic function and we can write a power series expansion for this function. Taking the product and looking at the principal part we can calculate the entries of $B_{\chi}(f,g)$. For example, note that the term corresponding to $t_j(p)^{-m_j} t_j(q)^{-m_j}$ in the product $f(p)g(q) - f(q)g(p)$ cancels out and we are left with $\frac{a_{j,-m_j+1} b_{j,-m_j} - a_{j,-m_j} b_{j,-m_j+1}}{t_j(p)^{m_j-1} t_j(q)^{m_j-1}} \left( \frac{1}{t_j(q)} - \frac{1}{t_j(p)}\right)$. The analytic terms can only decrease the powers in the denominators, so we are left with $B_{j,j,m_j,m_j} = a_{j,-m_j+1} b_{j,-m_j} - a_{j,-m_j} b_{j,-m_j+1}$ when we multiply by $\frac{1}{t_j(p) - t_j(q)}$.
\end{rem}

Let us assume from now on that $X$ is a Riemann surface of dividing type and fix an orientation on $X(\R)$. Let $D$ be a real and effective divisor and $f,g \in \cL(D)$ be real meromorphic functions. We will also assume that $\chi \in \T_0$ and set $J = J_0 = I_{\deg D_r} \oplus \left(\begin{smallmatrix} 0 & 1 \\ 1 & 0 \end{smallmatrix}\right)^{\oplus \deg D_i}$. Since the Bezoutian is linear and anti-symmetric in $f$ and $g$ we have that for $\alpha,\beta,\gamma,\delta \in \R$
\[
B(\alpha f + \beta g, \gamma f + \delta g) = (\alpha \delta - \beta \gamma) B(f,g).
\]
In particular, this means that choosing $\left( \begin{smallmatrix} \alpha & \beta \\ \gamma & \delta \end{smallmatrix} \right) \in \operatorname{SL}_2(\R)$ the above transformation will not affect the Bezoutian. Using this action we can, whenever it is convenient, assume that $(f)_{\infty} = (g)_{\infty}$. Additionally, note that in the proof of the above result, when we write $b_{\chi}(f,g)$ in terms of the basis of the tensor product, the $\mu_{j,k}$ and $\nu_{j,k}$ will appear if and only if $p_j$ is a pole of $f$ or $g$ and $k$ is at most the maximum of the order of the poles. Therefore, if $(f)_{\infty}, (g)_{\infty} < D$, then $B_{\chi}(f,g)$ will contain a block of zeroes. For this reason, we will assume that either $(f)_{\infty} = D$ or $(g)_{\infty} = D$.

\begin{lem} \label{lem:fiber_basis}
Let $D = (g)_{\infty}$, then for every $\lambda \in \C$, let $(g)_{\lambda} =\sum_{j=1}^r m_j q_j$ be the divisor of the fiber of $g$ at $\lambda$. Fix coordinates $t_j$ around each of the $q_j$, then the set of vectors $\{\dfrac{d^k}{d t_j^k}\buc(q_j) \mid j=1,\ldots,r,\, k=0,\ldots,m_j-1\}$ is a basis for $\C^N$.
\end{lem}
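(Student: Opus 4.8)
The plan is to recast the statement as the invertibility of a jet–evaluation map on $H^0(X,\cR)$ and then to identify the obstruction with a cohomology group that the standing hypotheses force to vanish. Recall that, by construction, the $N = \deg D$ entries of $\buc$ are (a reordering of) the sections $\nu_{j,k}$, which form a basis of $H^0(X,\cR)$ for $\cR = \cO_X(D)\otimes\chi^{\vee}\otimes\Delta$; for a linear–independence claim the reordering is immaterial, and we were told above that $h^0(\cR) = \deg D = N$ and $h^1(\cR) = 0$. Since $\lambda\in\C$, the divisor $(g)_{\lambda} = \sum_{j=1}^r m_j q_j$ is effective of degree $N$ and its support is disjoint from $\Supp D = \Supp (g)_{\infty}$; hence each $\nu_{j,k}$ is holomorphic near every $q_i$, and once the coordinates $t_i$ and a local frame of $\cR$ are fixed the vectors $\frac{d^k}{d t_i^k}\buc(q_i)\in\C^N$ are well defined.

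First I would introduce the linear map
\[
E\colon H^0(X,\cR)\longrightarrow\C^N,\qquad E(s) = \Bigl(\tfrac{d^k}{d t_i^k}s(q_i)\Bigr)_{\,1\le i\le r,\ 0\le k\le m_i-1}.
\]
Written in the basis $\{\nu_{j,k}\}$ of the source and the standard basis of $\C^N$, the matrix of $E$ is exactly the transpose of the $N\times N$ matrix whose columns are the vectors $\frac{d^k}{d t_i^k}\buc(q_i)$ (row $\ell$ of that column is $\frac{d^k}{d t_i^k}\nu_\ell(q_i)$, which is the $((i,k),\ell)$ entry of $E$). Thus the lemma is equivalent to the invertibility of $E$, and since $\dim_\C H^0(X,\cR) = N = \dim_\C\C^N$, invertibility is in turn equivalent to injectivity of $E$.

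Finally, $\ker E$ consists of the sections of $\cR$ vanishing to order at least $m_i$ at each $q_i$, that is,
\[
\ker E = H^0\!\left(X,\ \cO_X\bigl(D-(g)_{\lambda}\bigr)\otimes\chi^{\vee}\otimes\Delta\right).
\]
Now $g-\lambda$ is a meromorphic function with divisor $(g-\lambda) = (g)_{\lambda}-(g)_{\infty} = (g)_{\lambda}-D$, so multiplication by $g-\lambda$ is an isomorphism $\cO_X(D-(g)_{\lambda})\xrightarrow{\ \sim\ }\cO_X$, whence $\ker E\cong H^0(X,\chi^{\vee}\otimes\Delta)$. By Serre duality, together with $\Delta^{\vee}\otimes\Omega_X\cong\Delta$ (recall $\Delta\otimes\Delta=\Omega_X$), one has $h^0(\chi^{\vee}\otimes\Delta) = h^0\bigl((\chi\otimes\Delta)^{\vee}\otimes\Omega_X\bigr) = h^1(\chi\otimes\Delta) = 0$ by the choice of $\chi$. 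Hence $E$ is injective, therefore an isomorphism, and the vectors $\frac{d^k}{d t_i^k}\buc(q_i)$ form a basis of $\C^N$.

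I expect the only genuinely fiddly point to be the bookkeeping in the second paragraph — matching the ordering used in the definition of $\buc$ with the indexing of the jet coordinates and checking that the matrix of $E$ is the transpose of $\bigl[\frac{d^k}{d t_i^k}\buc(q_i)\bigr]$ on the nose; everything else is a direct invocation of Riemann–Roch, Serre duality, and the hypotheses on $\chi$ already in place. (Alternatively one can bypass $E$ and argue by contradiction: a nontrivial relation $\sum_{i,k}c_{i,k}\frac{d^k}{d t_i^k}\buc(q_i)=0$ would say the functional $s\mapsto\sum_{i,k}c_{i,k}\frac{d^k}{d t_i^k}s(q_i)$ annihilates all of $H^0(X,\cR)$, contradicting the fact that, because $h^0\bigl(\cO_X(D-(g)_{\lambda})\otimes\chi^{\vee}\otimes\Delta\bigr)=0$, the jets at the $q_i$ are realized by sections of $\cR$.)
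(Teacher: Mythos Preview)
Your proof is correct and takes a genuinely different route from the paper's. The paper proceeds computationally: it considers the Bezoutian $B_{\chi}(g,1)$ (after translating $\lambda$ to $0$), and shows that the Gram matrix of the sesquilinear form $J B_{\chi}(g,1)$ on the proposed set of vectors is invertible by exhibiting its block anti-diagonal structure, computed via local Taylor expansions of $b_{\chi}(g,1)(p,q)$ near each $q_j$ (with separate treatments of real points and conjugate pairs). Your argument is cohomological: you identify the matrix in question with the matrix of the jet-evaluation map $E \colon H^0(X,\cR) \to \C^N$ along the divisor $(g)_{\lambda}$, and show $\ker E \cong H^0(\chi^{\vee} \otimes \Delta) = 0$ directly from the standing hypothesis on $\chi$ together with the fact that $D - (g)_{\lambda}$ is the divisor of $g-\lambda$ and hence principal.

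Your approach is cleaner for the bare linear-independence statement, avoids the case split on real versus complex fiber points, and works uniformly for every $\lambda \in \C$ without invoking the real structure at all. The paper's computational approach, however, buys more: the explicit block form of the Gram matrix obtained there is reused immediately afterwards to compute $\dim\ker B_{\chi}(f,g)$ (Lemma~\ref{lem:Bezoutian_nullity}) and the inertia (Theorem~\ref{thm:signature}), so in context that calculation is doing double duty. If you continue to the later results you will still need those local expansions.
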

\begin{proof}
Consider the Bezoutian of $g$ and $1$ and applying a linear transformation we may assume that $\lambda = 0$. With respect to $t_j$ ,we have $g(t_j) = a_j t_j^{m_j} + \text{higher order terms}$. Now for $j \neq j'$, we know that:
\[
-\buc(q_j^{\tau})^* J B_{\chi}(g,1) \buc(q_{j'}) = b_{\chi}(g,1)(q_j, q_{j'}) = \left(g(q_j) - g(q_{j'})\right) K_{\chi}(q_j,q_{j'}) = 0.
\]
Fixing $q_{j'}$ and taking the derivative with respect to $t_j$: we get:
\[
- \dfrac{d}{d t_j} \buc(q_j^{\tau})^* J B_{\chi}(g,1) \buc(q_{j'}) = \left(g^{\prime}(q_j) - g(q_{j'})\right) K_{\chi}(q_j,q_{j'}).
\]
The second term is $0$ unless $m_j =1$. Proceeding to take derivatives, we see that the vectors $\dfrac{d^k}{d t_j^k} \buc(q_j^{\tau})$ are orthogonal to $\buc(q_{j'})$ with respect to the form defined by $J B_{\chi}(f,g)$. Similarly, one shows with respect to this form the sets of vectors $\{\dfrac{d^k}{d t_j^k} \buc(q_j^{\tau})\}_{k=1}^{m_j}$ and $\{\dfrac{d^k}{d t_j^k} \buc(q_{j'})\}_{k=1}^{m_{j'}}$ are mutually orthogonal. Therefore, we divide the proof into two cases. First, assume that the point $q_j$ is real. In this case, the subspace spanned by the above sections is orthogonal to all the other subspaces. If $m_j = 1$, we will move on to the next point, hence we assume that $m_j >1$. For $p,q$ close to $q_j$ and distinct, we have that:
\begin{multline*}
b_{\chi}(g,1)(p,q) = \left(a_j (t_j(p)^{m_j} - t_j(q)^{m_j}) + \cdots \right) \left( \frac{1}{t_j(p) - t_j(q)} + \text{analytic terms}\right) \\ = a_j \left( t_j(p)^{m_j-1} t_j(q) + t_j(p)^{m_j-2} t_j(q)^2 + \cdots + t_j(p) t_j(q)^{m_j-1} \right).+ \cdots.
\end{multline*}
If we set $p = q_j$, then the series collapses to $b_{\chi}(g,1)(q_j,q) = a_j t_j(q)^{m_j-1} + \cdots$. Therefore, in particular, taking derivatives with respect to $q$ and substituting $q_j$ for $q$, we get that for $0 \leq k < m_j - 1$, $\buc(q_j)^* J B_{\chi}(f,g) \dfrac{d^k}{d t_j^k}\buc(q_j) = 0$ and since $a_j \neq 0$, then $\buc(q_j)^* J B_{\chi}(g,1) \dfrac{d^{m_j-1}}{d t_j^{m_j-1}}\buc(q_j) = a_j (m_j-1)! \neq 0$. Now to find the other relations we see that taking the derivative with respect to $p$ and substituting $q_j$ instead of $p$ we get:
\[
- \dfrac{d}{d t_j} \buc(q_j)^* J B_{\chi}(g,1) \buc(q) = a_j t_j(q)^{m_j-2} + \cdots
\]
Thus, as above we have that the vector $\dfrac{d}{d t_j} \buc(q_j)$ is orthogonal, with respect to the $J B_{\chi}(g,1)$ inner product, to $\dfrac{d^k}{d t_j^k}\buc(q_j)$ for $0 \leq k < m_j-2$ and is not orthogonal to $\dfrac{d^{m_j-2}}{d t_j^{m_j-2}}\buc(q_j)$. Proceeding in this way we see that the Grammian of the form with respect to this set of vectors has the form:
\[
\begin{pmatrix} 0 & \cdots & 0 & \bullet \\ 0 & \cdots & \bullet & \\0 & \iddots & & \\ \bullet & & & \end{pmatrix}.
\]
Where the numbers on the anti-diagonal are non-zero. This implies that the set of vectors that we obtain at real points are all linearly independent.

If $q_j$ is complex then we consider the subspace spanned by $\dfrac{d^k}{d t_j^k}\buc(q_j)$ and $\dfrac{d^k}{d t_j^k}\buc(q_j^{\tau})$ for $k=0,\ldots,m_j-1$. Let $V = \operatorname{Span}\{\dfrac{d^k}{d t_j^k}\buc(q_j)\}$ and $V_{\tau} = \operatorname{Span}\{\dfrac{d^k}{d t_j^k}\buc(q_j^{\tau})\}$. By the argument, at the beginning of the proof every two vectors in $V$ are orthogonal to each other with respect to $J B_{\chi}(g,1)$ and similarly for every two vectors in $V_{\tau}$. Now let $t_j$ be a coordinate near $q_j$ and $t_j^{\tau}$ a coordinate near $q_j^{\tau}$ and let $p$ be close to $q_j^{\tau}$ and $q$ be close to $q_j$, then as in the real case
\begin{multline*}
b_{\chi}(g,1)(p,q) = \left(a_j (\overline{t_j(p^{\tau})}^{m_j} - t_j(q)^{m_j}) + \cdots \right) \left( \frac{1}{\overline{t_j(p^{\tau})} - t_j(q)} + \text{analytic terms}\right) \\ = a_j \left( \overline{t_j(p^{\tau})}^{m_j-1} t_j(q) + \overline{t_j(p^{\tau})}^{m_j-2} t_j(q)^2 + \cdots + \overline{t_j(p^{\tau})} t_j(q)^{m_j-1} \right).+ \cdots.
\end{multline*}
Now we let $p = q_j^{\tau}$ and as in the real case the series collapses and we find that $\buc(q_j^{\tau})$ is orthogonal with respect to $J B_{\chi}(g,1)$ to every $\dfrac{d^k}{d t_j^k} \buc(q_j)$ for $k=0,\ldots,m_j-2$ and not orthogonal to the last one. Now we can proceed as in the real case to obtain that the Gram matrix on $V \oplus V_{\tau}$ of our inner product has the form:
\[
\begin{pmatrix} 0 & \cdots & \cdots & 0 & 0 & \cdots & 0 & \bullet \\ 0 & \cdots & \cdots & 0 & 0 & \cdots & \bullet & \\0 & \cdots & \cdots & 0 & 0 & \iddots & & \\ 0 & \cdots & \cdots & 0 & \bullet & & & \\ 0 & \cdots & 0 & \bullet & 0 & \cdots  & \cdots & 0 \\ 0 & \cdots & \bullet & & 0 & \cdots  & \cdots & 0 \\0 & \iddots & & & 0 & \cdots & \cdots& 0 \\ \bullet & & & & 0 & \cdots & \cdots & 0\end{pmatrix}.
\]
This implies that our vectors are a basis for $V \oplus V_{\tau}$. Since the Gram matrix for the inner product on the space is the direct sum of these matrices we see that these vectors form a basis.
\end{proof}

Now let $D = (g)_{\infty}$, $N = \deg D$ and let $f \in \cL(D)$. We may assume as above that $(f)_{\infty} = D$ as well. We would like to understand the indefinite inner product defined by $J B_{\chi}(f,g)$. Let $(g)_0 = \sum_{j=1}^r m_j q_j$ and consider the basis $\{\dfrac{d^k}{d t_j^k}\buc(q_j) \mid j=1,\ldots,r,\, k=0,\ldots,m_j-1\}$ obtained in Lemma \ref{lem:fiber_basis}, with respect to a choice of coordinates $t_j$ around $q_j$. Assume that $q_{j_0}$ is a common zero of $f$ and $g$ of multiplicity $\ell$. If $q_{j_0}$ is complex then $q_{j_0}^{\tau}$ is also a common zero of $f$ and $g$ of the same multiplicity. Assume first that $q_{j_0}$ is real. Write $g(t_j(p)) = a_j t_j(p)^{m_j} + \text{higher order terms}$ , where $a_j \neq 0$ and $f(t_j(p)) = b_{\ell} t_j(p)^{\ell} + \text{higher order terms}$, where $b_{\ell} \neq 0$ if $\ell < m_j$ and can vanish otherwise. Assume first that $\ell < m_j$, then for $p, q \in X$ distinct and close to $q_j$ we get:
\begin{multline*}
b_{\chi}(f,g)(p,q) = \\ \left( a_j b_{\ell} t_j(p)^{\ell} t_j(q)^{\ell}( t_j(q)^{m_j -\ell} - t_j(p)^{m_j-\ell}) + \cdots\right) \left( \frac{1}{t_j(p) - t_j(q)} + \text{analytic terms}\right)\\ = a_j b_{\ell} t_j(p)^{\ell} t_j(q)^{\ell}\left( t_j(p)^{m_j - \ell - 1} + t_j(p)^{m_j-\ell-2} t_j(q) + \cdots t_j(q)^{m_j - \ell -1} \right) + \cdots  
\end{multline*}
Taking derivatives with respect to $p$ and $q$ and substituting $q_{j_0}$ we see that for every $0 \leq k < \ell$, the vectors $\dfrac{d^k}{d t_j^k} \buc(q_{j_0})$ are orthogonal to $\dfrac{d^{k'}}{d t_j^{k'}} \buc(q_{j_0})$ for every $0 \leq k' < m_j-1$, with respect to the inner product defined by $J B_{\chi}(f,g)$. Since $b_{\ell} \neq 0$ we see that the inner product of $\dfrac{d^{\ell +k}}{d t_j^{\ell+k}} \buc(q_{j_0})$ with $\dfrac{d^{m_j-1 - k}}{d t_j^{m_j-1-k}} \buc(q_{j_0})$ is $a_j b_{\ell} (\ell + k)! (m_j - 1 - k)! \neq 0$ for $k = 0,\ldots, m_j-1 - \ell$. Note that as in the proof of Lemma \ref{lem:fiber_basis} the vectors corresponding to different points in the support of $(g)_0$ are orthogonal with respect to our indefinite inner product. We can conclude that the first $\ell$ vectors corresponding to $q_{j_0}$ are orthogonal to every vector. If $q_{j_0}$ is complex, we consider the spaces $V \oplus V_{\tau}$ as in Lemma \ref{lem:fiber_basis} to see that both for $q_{j_0}$ and $q_{j_o}^{\tau}$ the first $\ell$ vectors are orthogonal to the entire space with respect to $J B_{\chi}(f,g)$. This implies that $\dim \ker J B_{\chi}(f,g) = \dim \ker B_{\chi}(f,g) \geq \deg \left((f)_0 \wedge (g)_0 \right)$. On the other hand, if $v \in \ker B_{\chi}(f,g)$, then we can write $v$ as a linear combination of vectors in our basis and taking the indefinite inner product with the vectors of our basis in increasing order we see that $v$ must be spanned by the degenerate vectors. Therefore, we have proved:

\begin{lem} \label{lem:Bezoutian_nullity}
Let $D = (g)_{\infty}$ and $f \in \cL(D)$, then
\[
\dim \ker J B_{\chi}(f,g) = \dim \ker B_{\chi}(f,g) = \deg \left( (f)_0 \wedge (g)_0 \right).
\]
\end{lem}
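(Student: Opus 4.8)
The plan is to diagonalize the $J$-Hermitian sesquilinear form $JB_\chi(f,g)$ on $\C^N$ into a direct sum of blocks indexed by the points of the fiber $(g)_0$, and then to read off the nullity of each block from a local Laurent expansion of $b_\chi(f,g)$. As a preliminary normalization I use that an $\operatorname{SL}_2(\R)$ substitution $(f,g)\mapsto(\alpha f+\beta g,\gamma f+\delta g)$ only rescales $B_\chi(f,g)$ by $\alpha\delta-\beta\gamma$ and leaves $\deg\big((f)_0\wedge(g)_0\big)$ unchanged, so I may assume $(f)_\infty=(g)_\infty=D$, whence $B_\chi(f,g)$ and $JB_\chi(f,g)$ are genuine $N\times N$ matrices. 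Since $J=J_0$ is invertible, $\ker JB_\chi(f,g)=\ker B_\chi(f,g)$, so only the second equality requires proof.

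Write $(g)_0=\sum_{j=1}^r m_jq_j$ and take the basis $\{\tfrac{d^k}{dt_j^k}\buc(q_j):1\le j\le r,\ 0\le k<m_j\}$ of $\C^N$ produced by Lemma \ref{lem:fiber_basis} (with $\lambda=0$). The proof of that lemma already establishes that vectors attached to distinct points $q_j$ are mutually orthogonal for $JB_\chi(f,g)$: since $g$ vanishes at each $q_j$, the numerator $f(p)g(q)-f(q)g(p)$ of $b_\chi(f,g)$ vanishes to the order needed when $p,q$ sit near distinct fiber points and $K_\chi$ is holomorphic there. Hence the Gram matrix of the form in this basis is block diagonal, with one block $G_j$ per real $q_j$ and one block on $V\oplus V_\tau$ per conjugate pair $\{q_j,q_j^\tau\}$, so that $\dim\ker B_\chi(f,g)=\sum_j\dim\ker G_j$. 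Since every common zero of $f$ and $g$ lies in $\operatorname{supp}(g)_0$, it remains to show $\dim\ker G_j=\ell_j$ (resp.\ $2\ell_j$) where $\ell_j=\min(\nu_{q_j}(f),m_j)$, as then $\sum_j(1\text{ or }2)\ell_j=\deg\big((f)_0\wedge(g)_0\big)$.

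Fix such a point $q=q_{j_0}$, put $m=m_{j_0}$, $\ell=\ell_{j_0}$, and choose the coordinate $t$. If $\ell<m$ write $g(t)=at^m+\cdots$, $f(t)=b_\ell t^\ell+\cdots$ with $a,b_\ell\ne 0$; using $K_\chi(p,q)=(t(p)-t(q))^{-1}+(\text{holomorphic})$ one finds
\[
b_\chi(f,g)(p,q)=a\,b_\ell\,t(p)^{\ell}t(q)^{\ell}\big(t(p)^{m-\ell-1}+t(p)^{m-\ell-2}t(q)+\cdots+t(q)^{m-\ell-1}\big)+(\text{higher order}),
\]
where every correction monomial has both exponents $\ge\ell$ and total degree $>\ell+m-1$. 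Extracting the coefficient of $t(p)^kt(q)^{k'}$ (which is the $(k,k')$-entry of $G_j$ up to the nonzero constant $k!\,k'!$ and the signs of $J_v$), one sees that this entry vanishes whenever $k<\ell$ or $k'<\ell$ or $k+k'<\ell+m-1$, and equals a nonzero multiple of $a\,b_\ell$ when $k+k'=\ell+m-1$ and $k,k'\ge\ell$; thus $G_j$ is anti-triangular with nonvanishing anti-diagonal supported on indices $\ell,\ldots,m-1$, hence of nullity exactly $\ell$. In the remaining case $\ell=m$ both $f$ and $g$ vanish to order $m$ at $q$, so $b_\chi(f,g)$ is divisible by $t(p)^mt(q)^m$ and $G_j=0$, again of nullity $\ell$. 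For a complex point one runs the same computation on $V\oplus V_\tau$ exactly as in Lemma \ref{lem:fiber_basis}, obtaining nullity $2\ell$. Summing over $j$ gives $\dim\ker B_\chi(f,g)=\deg\big((f)_0\wedge(g)_0\big)$, completing the proof.

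The step I expect to be the main obstacle is the last one: showing that the subleading terms of $f$, $g$, and of $K_\chi$ cannot produce low-degree entries that destroy the anti-triangular shape of $G_j$, and carrying the bookkeeping of the $J_v$-signs and of the identification $\bucl(p)=-(J_v\buc(p^\tau))^*$ from Corollary \ref{cor:u_times_star} through the local expansion. Much of this is, however, already handled inside the proof of Lemma \ref{lem:fiber_basis}, which reduces the present statement to the elementary local computations above.
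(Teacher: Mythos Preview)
Your proposal is correct and follows essentially the same route as the paper: both use the fiber basis of Lemma~\ref{lem:fiber_basis} at $\lambda=0$, observe the block-diagonal (orthogonality between distinct fiber points) structure of the Gram matrix of $JB_\chi(f,g)$, and compute each block via the local expansion $b_\chi(f,g)(p,q)=a_jb_\ell\,t^\ell t'^\ell(t^{m-\ell-1}+\cdots+t'^{m-\ell-1})+\cdots$ to read off the anti-triangular shape and hence the nullity $\ell_j$ per block. Your write-up is in fact a bit more careful than the paper's, since you treat the degenerate case $\ell=m$ and the conjugate-pair blocks explicitly, whereas the paper only spells out $\ell<m$ and the real case; the concern you flag about subleading terms is handled in both accounts by the same total-degree bookkeeping (every correction monomial has both exponents $\ge\ell$ and total degree $>\ell+m-1$), so there is no genuine gap.
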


Now in order to understand this form better we need to consider the meromorphic function $h = \frac{f}{g}$. As observed above, applying a M\"{o}bius map in $\operatorname{SL}_2(\R)$ we may assume that $D = (f)_{\infty} = (g)_{\infty}$. We obtain thus that $\deg h = N - \deg \left((f)_0 \wedge (g)_0\right) = \rank B_{\chi}(f,g)$ and $(h)_{\infty} = (g)_0 - (f)_0 \wedge (g)_0$. We will use $h$ to construct a different set of vectors, not in the kernel of $B_{\chi}(f,g)$. Let $\lambda \in \R$ be such that $h^{-1}(\lambda) \cap \left(\Supp D \cup \Supp(g)_0\right) = \emptyset$ and assume that $h$ is unramified over $\lambda$. Then we have that for two distinct points $p,q \in h^{-1}(\lambda)$:
\[
b_{\chi}(f,g)(p,q) = g(q) g(p) \left(h(p) - h(q) \right) K_{\chi}(p,q) = 0.
\]
Let us write $(h)_{\lambda} = \sum_{j=1}^m p_j$, with all $p_j$ distinct since the fiber is unramified. Furthermore, since $h$ is real and $\lambda \in \R$, the divisor $(h)_{\lambda}$ is real. Let us fix coordinates $t_j$ at each $p_j$ and assume that the coordinates are real and respect the orientation for real points and respect $\tau$ for conjugate points. Thus we have obtained that $\buc(p_j^{\tau})$ and $\buc(p_{j'})$ are orthogonal with respect to our inner product for $j \neq j'$. Now fix $p_j$ and note that with respect to $t_j$ we have that $h'(t_j(p)) = \frac{f'(t_j(p)) g(t_j(p)) - f(t_j(p)) g'(t_j(p))}{g(t_j(p))^2}$ and the derivative does not vanish at $p_j$ since it is not a branch point. On the other hand, we know that $b_{\chi}(f,g)(p_j,p_j) = -h'(p_j) g(p_j)^2 \neq 0$, therefore $\buc(p_j)$ is non-isotropic for $p_j$ real and isotropic otherwise. However, for $p_j \neq p_j^{\tau}$  the inner product of $\buc(p_j)$ with $\buc(p_j^{\tau})$ is non-zero. Furthermore, we see that if $p_j \in X(\R)$, then the inner product of $\buc(p_j)$ with itself with respect to $J B_{\chi}(f,g)$ is a real number with the same sign as $h'(p_j)$. We summarize this section in the following theorem:
\begin{thm} \label{thm:signature}
Let $X$ be a real Riemann surface of dividing type and fix an orientation on $X(\R)$. Let $D$ be a real divisor on $X$, $f, g \in \cL(D)$ be real meromorphic functions be such that $(g)_{\infty} = D$ and set $h = f/g$. Let $\chi \in \T_0$ and $J$ be the signature matrix of $D$ with respect to $\chi$. Let $\lambda \in \R$ be such that the fiber of $h$ over $\lambda$ is unramified and does not contain zeroes of $g$ or points from the support of $D$. Let $c$ be the number of complex conjugate pairs in $h^{-1}(\lambda)$, $r_+$ the number of real points in $h^{-1}(\lambda)$, such that the derivative of $h$ at them preserves orientation and $r_-$ the number of real points where the orientation is reversed. Then the inertia of the self-adjoint matrix $J B_{\chi}(f,g)$ is:
\begin{itemize}
\item $n_+ = r_+ + c$;
\item $n_- = r_- + c$;
\item $n_0 = \deg D - \deg \left( (f)_0 \wedge (g)_0 \right)$.
\end{itemize}
In particular, the signature of $J B_{\chi}(f,g)$ is $r_+ - r_-$.
\end{thm}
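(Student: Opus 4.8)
The plan is to diagonalize the Hermitian form $\langle u,v\rangle = u^{*}JB_{\chi}(f,g)v$ by producing an adapted basis out of a generic fiber of $h$ and reading off $n_{+},n_{-}$ from the block structure of its Gram matrix; the value of $n_{0}$ is already contained in Lemma \ref{lem:Bezoutian_nullity}. First I would use the identity $B(\alpha f+\beta g,\gamma f+\delta g)=(\alpha\delta-\beta\gamma)B(f,g)$ to replace $(f,g)$ by $(f+\epsilon g,g)$ for a generic real $\epsilon$ (an $\operatorname{SL}_{2}(\R)$ action on the pair): this leaves $B_{\chi}(f,g)$ untouched, it arranges $(f)_{\infty}=(g)_{\infty}=D$, and it replaces $h$ by $h+\epsilon$ and $\lambda$ by $\lambda+\epsilon$ — more generally by $\phi\circ h$ and $\phi(\lambda)$ for a real Möbius map $\phi\in\operatorname{PSL}_{2}(\R)$ with $\phi'(\lambda)=(\gamma\lambda+\delta)^{-2}>0$. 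Since such a $\phi$ is orientation preserving on $\R\cup\{\infty\}$, the fiber $h^{-1}(\lambda)$, its splitting into real points and conjugate pairs, and the signs of $dh$ along it are all unchanged, so the integers $r_{+},r_{-},c$ (and $n_{0}$) are unaffected. With $(f)_{\infty}=(g)_{\infty}=D$ we have $(h)=(f)_{0}-(g)_{0}$, hence $\deg h=N-\deg((f)_{0}\wedge(g)_{0})$, which by Lemma \ref{lem:Bezoutian_nullity} equals $\rank B_{\chi}(f,g)=\rank JB_{\chi}(f,g)=:m$.

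Next I would build the adapted basis. Write $(h)_{\lambda}=\sum_{j=1}^{m}p_{j}$, the $p_{j}$ distinct (the fiber is unramified) and the divisor real; pick coordinates $t_{j}$ at the $p_{j}$ that are real and orientation respecting at real points and $\tau$-conjugate at complex ones, and put $v_{j}=\buc(p_{j})\in\C^{N}$. Since $f=hg$ one has $b_{\chi}(f,g)(p,q)=g(p)g(q)\bigl(h(p)-h(q)\bigr)K_{\chi}(p,q)$, and combining this with the identity $b_{\chi}(f,g)(p,q)=-\buc(p^{\tau})^{*}JB_{\chi}(f,g)\buc(q)$ of Proposition \ref{prop:bezoutian_formula} gives $\langle v_{i},v_{j}\rangle=-b_{\chi}(f,g)(p_{i}^{\tau},p_{j})$. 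Because $h(p_{i}^{\tau})=\overline{h(p_{i})}=\lambda=h(p_{j})$, the factor $h(p_{i}^{\tau})-h(p_{j})$ vanishes, so $\langle v_{i},v_{j}\rangle=0$ whenever $p_{i}^{\tau}\ne p_{j}$; the surviving entries occur only where $p_{i}^{\tau}=p_{j}$, as a finite $0\cdot\infty$ limit. Consequently the Gram matrix $G=(\langle v_{i},v_{j}\rangle)$ is block diagonal: a $1\times1$ block $[\,h'(p_{j})g(p_{j})^{2}\,]$ at each real $p_{j}$ — a nonzero real number of sign $\sign h'(p_{j})$ since $g(p_{j})\in\R\setminus\{0\}$ and $h'(p_{j})\ne0$; and a $2\times2$ block $\left(\begin{smallmatrix}0&\beta\\ \bar\beta&0\end{smallmatrix}\right)$ with $\beta\ne0$ for each conjugate pair $\{p_{j},p_{j}^{\tau}\}$, the two vectors being isotropic but pairing nontrivially. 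These values are exactly the ``collapse'' and orthogonality computations already carried out in the paragraphs preceding the statement (and in Lemma \ref{lem:fiber_basis}), which use the Laurent expansion $K_{\chi}(p,q)=\frac{1}{t_{j}(p)-t_{j}(q)}+(\text{analytic})$, the $\sqrt{dt_{j}}$-normalization of the entries of $\buc$, and Lemma \ref{lem:duality}.

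Finally I would assemble. The block matrix $G$ is invertible, so the $v_{j}$ are linearly independent and span a subspace $W$ with $\dim W=m$ on which the form is nondegenerate; writing $\C^{N}=W\oplus W^{\perp}$ orthogonally and using $\rank JB_{\chi}(f,g)=m=\rank(\text{form}|_{W})$, the form must vanish on $W^{\perp}$, so $W^{\perp}=\ker JB_{\chi}(f,g)$ and the inertia of $JB_{\chi}(f,g)$ is that of $G$ plus $(0,0,n_{0})$. Each real fiber point with $h'(p_{j})>0$ contributes $(1,0)$ to the inertia of $G$ (there are $r_{+}$ of these), each with $h'(p_{j})<0$ contributes $(0,1)$ ($r_{-}$ of these), and each conjugate pair contributes the inertia $(1,1)$ of a hyperbolic plane ($c$ of these); summing gives $n_{+}=r_{+}+c$, $n_{-}=r_{-}+c$, and hence the signature $n_{+}-n_{-}=r_{+}-r_{-}$. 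The hard part will be the $\tau$-equivariant bookkeeping of the middle step — locating the nonzero entries of $G$ precisely at the indices with $p_{i}^{\tau}=p_{j}$ and pinning down their exact values and signs — but this is essentially what the discussion before the theorem already does, so the remainder is routine linear algebra.
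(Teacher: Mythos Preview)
Your proposal is correct and follows essentially the same approach as the paper: the argument there (carried out in the paragraphs between Lemma~\ref{lem:Bezoutian_nullity} and the theorem) likewise reduces via the $\operatorname{SL}_2(\R)$-action to $(f)_\infty=(g)_\infty=D$, then evaluates $b_\chi(f,g)(p,q)=g(p)g(q)(h(p)-h(q))K_\chi(p,q)$ on pairs from the unramified real fiber $(h)_\lambda$ to obtain a block-diagonal Gram matrix with $1\times1$ real blocks of sign $\sign h'(p_j)$ and $2\times2$ hyperbolic blocks for conjugate pairs. Your write-up is in fact slightly more careful in one respect: you verify explicitly that the Möbius reparametrization, being orientation preserving on $\R\cup\{\infty\}$, leaves $r_+,r_-,c$ invariant, and you deduce linear independence of the $v_j$ directly from invertibility of $G$ rather than appealing separately to a fiber-basis lemma.
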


\section{Main Result} \label{sec:main_result}

Let $X$ again be a real Riemann surface of dividing type with a fixed orientation on $X(\R)$. Let $D$ be a real divisor on $X$ and $f,g \in \cL(D)$ be real meromorphic functions. As in the previous section, we will assume that $(f)_{\infty} = (g)_{\infty} = D$. Denote again $h = f/g$. Fix a flat line bundle $\chi \in \T_0$, $\Delta$ a line bundle of half-order differential on $X$ and let $J$ be the signature matrix for $D$ with respect to $\chi$. In the previous section, we have discussed Bezoutians on Riemann surfaces and signatures of the self-adjoint matrices associated to them. In this section, we will use the tool of the signature to collect topological data on the map $h$. We denote by $\sigma J B_{\chi}(f,g)$ the signature of the matrix.
\begin{thm} \label{thm:from_sig_to_coh}
\[
\sigma J B_{\chi}(f,g) =  \sum_{Y \subset X(\R)} \deg h_*|_{H^1(Y)}([Y]).
\]
The sum runs over the circles in $X(\R)$ and takes the winding number of $h$ restricted to this circle ($0$ if the circle does not cover $\pp^1(\R)$).
\end{thm}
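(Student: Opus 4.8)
The plan is to deduce the identity directly from Theorem \ref{thm:signature} by reinterpreting the integer $r_+ - r_-$ appearing there as a sum of winding numbers. Recall that $X(\R)$ is a disjoint union of finitely many circles $Y_1,\dots,Y_k$, each carrying the orientation that makes it a component of $\partial X_+$. Since $f$ and $g$ are real, so is $h = f/g$, and hence $h$ maps $X(\R)$ into $\pp^1(\R) \cong S^1$; restricting gives real-analytic maps of circles $h_i := h|_{Y_i}\colon Y_i \to \pp^1(\R)$, a pole of $h$ on $Y_i$ being sent to $\infty$. With $\pp^1(\R)$ given the orientation underlying the phrase ``preserves orientation'' in Theorem \ref{thm:signature}, the degree $\deg h_i \in \Z$ is precisely the quantity $\deg h_*|_{H^1(Y_i)}([Y_i])$ in the statement, and it vanishes exactly when $h_i$ fails to be surjective.

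First I would choose $\lambda \in \R$ admissible in the sense of Theorem \ref{thm:signature}, i.e.\ the fiber of $h$ over $\lambda$ is unramified and contains no zero of $g$ and no point of $\Supp D$; such $\lambda$ exist because each condition excludes only finitely many values. Since the fiber over $\lambda$ is unramified, $h$ is a local biholomorphism at every point of $h^{-1}(\lambda)$, so $\lambda$ is automatically a regular value of each $h_i$, and at every real point $p \in h^{-1}(\lambda) \cap Y_i$ the map $h_i$ is a local diffeomorphism of circles that preserves or reverses orientation in exactly the sense recorded by $r_+$ and $r_-$ in Theorem \ref{thm:signature}.

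Next I would invoke the standard computation of the degree of a smooth map of circles at a regular value,
\[
\deg h_i \;=\; \sum_{p \in h_i^{-1}(\lambda)} \varepsilon_i(p),
\]
where $\varepsilon_i(p) = +1$ if $h_i$ preserves orientation at $p$ and $\varepsilon_i(p) = -1$ otherwise, the sum being empty (hence $0$) when $\lambda \notin \operatorname{im} h_i$. Summing over $i$ and using $X(\R) = \bigsqcup_i Y_i$ together with the unramifiedness of the fiber, the right-hand side collapses to the number of real points of $h^{-1}(\lambda)$ at which $h$ preserves orientation minus the number at which it reverses it, which is $r_+ - r_-$. By Theorem \ref{thm:signature} this equals $\sigma J B_\chi(f,g)$, proving the claim; the degenerate case $h \equiv \mathrm{const}$ (equivalently $f \wedge g = 0$, so $B_\chi(f,g) = 0$) is trivial, both sides being zero.

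The one step that really needs care is the bookkeeping of orientations: one must verify that the orientation of $\pp^1(\R)$ used to define the winding numbers $\deg h_i$ is the same as the one with respect to which ``preserves/reverses orientation'' is meant in Theorem \ref{thm:signature}, so that the signs $\varepsilon_i(p)$ match the counts contributing to $r_+$ and $r_-$ pointwise. Granting that, the proof is just an application of elementary degree theory for maps $S^1 \to S^1$ on top of the already-established Theorem \ref{thm:signature}.
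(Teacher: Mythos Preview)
Your argument is correct and is essentially the same as the paper's: both start from Theorem~\ref{thm:signature}, which already identifies $\sigma J B_\chi(f,g)$ with $r_+-r_-$ at an unramified real value $\lambda$, and then recognise $r_+-r_-$ as the sum over the circles $Y\subset X(\R)$ of the degrees of $h|_Y$ via the regular-value formula for the degree of a map $S^1\to S^1$. The paper phrases this last step more informally (tracking sign changes of $h'$ along segments between consecutive preimages of $\lambda$), while you invoke the standard degree formula directly; the content is identical. One small slip: your claim that $\deg h_i$ ``vanishes exactly when $h_i$ fails to be surjective'' is false in one direction (a surjective circle map can have degree $0$), though only the implication ``not surjective $\Rightarrow$ degree $0$'' is needed, and that one is fine.
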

\begin{proof}
Fix $\lambda \in \R$, such that the fiber of $h$ over $\lambda$ is unramified, and does not contain any points from the support of $D$ and zeroes of $g$. Let $Y \subset X(\R)$ be one of the circles. If $h^{-1}(\lambda) \cap Y = \emptyset$, then $Y$ does not cover $\pp^1(\R)$ and thus $h_*([Y]) = 0$. Assume now that $h^{-1}(\lambda) \cap Y = \{p_1,\ldots,p_k\}$ ordered according to the orientation induced on $Y$ from $X(\R)$. Let us proceed on the segments between $p_i$ and $p_{i+1}$ and observe the change in the sign of the derivative. If the derivative of $h$ changes signs, then it is not a covering for this implies that there is a point, at which as we move along the segment the image changes direction and goes back. If the derivative doesn't change signs it means that we arrive at the same point in the image from the other direction thus completing a loop.

\end{proof}

\begin{rem}
This, in fact, is a version of the Cauchy index of a real rational function on the entire real line. In \cite[X]{KreNai81} it is proved that the signature of the Bezoutian of two real polynomials is precisely the Cauchy index of their quotient and the above theorem generalizes this result to the setting of Riemann surfaces.
\end{rem}

This leads to the following result, which is a generalization of the argument principle to the case at hand.
\begin{thm} \label{thm:argument}
Let $f$ and $g$ be real meromorphic functions on $X$ and $\lambda \in \C_+$, then:
\begin{equation} \label{eq:argument}
\sigma J B_{\chi}(f,g) = \sum_{p \in h^{-1}(\lambda) \cap X_+} e(p) - \sum_{p \in h^{-1}(\lambda) \cap X_-} e(p).
\end{equation}
Here $e(p)$ stands for the multiplicity of the point $p$ in the fiber over $\lambda$.
\end{thm}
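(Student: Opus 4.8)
The plan is to reduce the statement about a complex value $\lambda \in \C_+$ to the already-established Theorem~\ref{thm:signature}, which handles only real $\lambda$, by an $\operatorname{SL}_2(\R)$-change of variables together with a homotopy argument. First I would note that the left-hand side of \eqref{eq:argument} depends on $f$ and $g$ only through the pair of sections, and, by the multiplicativity $B(\alpha f + \beta g, \gamma f + \delta g) = (\alpha\delta - \beta\gamma) B(f,g)$ recorded just before Lemma~\ref{lem:fiber_basis}, the matrix $J B_{\chi}(f,g)$ — hence its signature — is invariant under replacing $(f,g)$ by $(f,g)\cdot M$ for $M \in \operatorname{SL}_2(\R)$, which corresponds to post-composing $h = f/g$ with the associated Möbius transformation of $\pp^1$ preserving $\pp^1(\R)$ and the two half-planes $\C_\pm$. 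The right-hand side of \eqref{eq:argument} is likewise invariant under such a post-composition, since a real Möbius map acting on $\pp^1$ carries $h^{-1}(\lambda)$ to $(m \circ h)^{-1}(m(\lambda))$ preserving multiplicities, and sends $\C_+$ to $\C_+$ (or $\C_-$, in which case both sides flip sign consistently — I would just fix orientation of $\operatorname{PSL}_2(\R)$ to stay inside the identity component). So both sides are invariant under $\lambda \mapsto m(\lambda)$ for $m$ in the real Möbius group acting on $\C_+$, and it therefore suffices to prove \eqref{eq:argument} for a single convenient value, say $\lambda = i$.

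Next I would combine Theorem~\ref{thm:signature} with Theorem~\ref{thm:from_sig_to_coh}. Theorem~\ref{thm:from_sig_to_coh} identifies $\sigma J B_{\chi}(f,g)$ with the sum of winding numbers $\sum_{Y} \deg h_\ast|_{H^1(Y)}([Y])$ over the circles $Y$ of $X(\R)$. The key topological observation is that for each circle $Y$, the winding number of $h|_Y \colon Y \to \pp^1(\R)$ equals the signed count of preimages in $Y$ of any regular value $\lambda_0 \in \pp^1(\R)$, and — crucially — by continuity of $h$ and the fact that $X_+$ is the component of $X \setminus X(\R)$ with $\partial X_+ = X(\R)$, this winding number also equals $\#(h^{-1}(\lambda) \cap X_+) - \#(h^{-1}(\lambda) \cap X_-)$ counted with multiplicity, for $\lambda$ in a small neighborhood of $\lambda_0$ on one side. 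More robustly: the function $\lambda \mapsto \sum_{p \in h^{-1}(\lambda)\cap X_+} e(p) - \sum_{p \in h^{-1}(\lambda)\cap X_-} e(p)$ is locally constant on $\C_+$ (no branch points or boundary crossings occur as $\lambda$ varies in the open set $\C_+$, since a collision of fiber points or a fiber point hitting $X(\R)$ would force $\lambda$ onto the critical locus or onto $h(X(\R)) \subseteq \pp^1(\R)$), and $\C_+$ is connected, so this integer is independent of $\lambda \in \C_+$; I may therefore evaluate it by letting $\lambda$ approach a regular value $\lambda_0 \in \pp^1(\R)$ from above, where it limits to $\sum_Y \deg h_\ast|_{H^1(Y)}([Y])$ by the boundary-orientation analysis already used in the proof of Theorem~\ref{thm:from_sig_to_coh}.

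Assembling: for $\lambda_0 \in \R$ a regular value avoiding the support of $D$ and the zeros of $g$, Theorem~\ref{thm:signature} gives $\sigma J B_{\chi}(f,g) = r_+ - r_-$, and the proof of Theorem~\ref{thm:from_sig_to_coh} shows $r_+ - r_- = \sum_Y \deg h_\ast|_{H^1(Y)}([Y])$; by the previous paragraph this common value equals the right-hand side of \eqref{eq:argument} for $\lambda$ near $\lambda_0$ in $\C_+$, and by the connectedness of $\C_+$ it equals the right-hand side for every $\lambda \in \C_+$. I expect the main obstacle to be the careful bookkeeping of orientations and multiplicities in the limiting step — verifying that as $\lambda \to \lambda_0 \in \pp^1(\R)$ from inside $\C_+$, a simple preimage $p \in X(\R)$ at which $h$ preserves orientation contributes a preimage migrating into $X_+$ (hence $+1$) while one reversing orientation migrates into $X_-$ (hence $-1$); this is a local computation in a boundary chart, essentially the statement that an orientation-preserving map of intervals pushes the upper half-plane to the upper half-plane. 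A clean alternative, avoiding the limit, is to invoke directly that the signed fiber count is the degree of the proper map $h \colon X_+ \to \overline{\C_+}$ relative to its boundary, which by the standard relative-degree argument equals the winding number of the boundary map $h|_{X(\R)}$; either way the analytic content is minimal and the work is in making the boundary orientations match the sign conventions fixed in Section~\ref{sec:riemann_surfaces}.
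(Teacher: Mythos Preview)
Your argument is correct and coincides with the paper's first proof: both show the right-hand side is constant on $\C_+$ and then evaluate it by pushing a real regular value $\lambda_0$ slightly into $\C_+$, observing that a real fiber point migrates into $X_+$ or $X_-$ according to the sign of $h'$ there, while conjugate pairs contribute zero. One small slip: critical values of $h$ may well lie in $\C_+$, so your parenthetical ruling out ``collisions of fiber points'' is not quite right; the signed count is nonetheless locally constant because colliding points lie in the same component $X_\pm$, or---as the paper does---one can run a path argument (a preimage curve passing from $X_+$ to $X_-$ would meet $X(\R)$, forcing $\lambda$ real). The paper also supplies a second, independent proof via the residue theorem applied to $d\log\bigl((h-\lambda)/(h-\bar\lambda)\bigr)$ on $X_+$, which is exactly your closing ``clean alternative'' made analytic; your opening $\operatorname{SL}_2(\R)$-reduction is valid but, as you yourself end up not using it, unnecessary.
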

\begin{proof}
We will provide two proofs for this fact, one relying on Theorem \ref{thm:from_sig_to_coh} and another that is independent of it. We first note that for every point $\lambda \in \C_+$ the right-hand side is constant. Indeed, if for $\lambda_1, \lambda_2 \in \C_+$ the right-hand side was distinct, then draw a line between them in $\C_+$ and look at the curves in the preimage of this line under $h$. One of the curves starts in $X_+$ and end in $X_-$ and since $X$ is dividing it must cross $X(\R)$. However, $h$ is real and thus maps the real point onto the real axis and this is a contradiction. 

\textbf{First Approach:} Let $\mu \in \R$, such that the fiber of $h$ is unramified over $\mu$ and does not contain points from the support of $D$ or $(g)_0$. We will perturb $\mu$ slightly and observe the changes in the fiber. Choose a small disc $D$ around $\mu$, such that $h|_{h^{-1}(D)}$ is a covering map. Let us write $h^{-1}(D) = \sqcup_{p \in h^{-1}(\mu)} D_p$, where each $D_p$ is an open subset of $X$, containing $p$ and homeomorphic to $D$ via $h$. If $p \in h^{-1}(\mu) \cap X_+$, then, shrinking $D$ if necessary, we may assume that $D_p \subset X_+$. Hence every point in $D$ has a preimage in $X_+$. However, applying the same argument to $p^{\tau} \in h^{-1}(\mu) \cap X_-$ we see that each point in $D$ has a preimage in $X_-$. Furtheremore, for every $q \in D_p$, $h(q^{\tau}) = \overline{h(q)} \in D$, therefore $D_p^{\tau} \subset D_{p^{\tau}}$ and applying the same argument in reverse we see that $D_p^{\tau} = D_{p^{\tau}}$. Therefore, for every pair of conjugate preimages of $\mu$, every point in $D$ has a pair of conjugate preimages. In particular, for $\lambda \in D \cap \C_+$, the conjugate pair does not contribute to the right-hand side of \eqref{eq:argument}. Now for $p \in h^{-1}(\mu) \cap X(\R)$. Again shrinking $D$ if necessary, we may assume that $D_p$ is equipped with a real coordinate $t$ respecting the orientation of $X(\R)$, centered at $p$. Let us write the Taylor series expansion of $h$ with respect to $t$, $h(t(q)) = \mu + h^{\prime}(t(p)) t(p) + \cdots$. Now note that by our assumption on $t$, for $q \in D_p \cap X_+$ we have that $t(q) \in \C_+$. Thus if $h^{\prime}(t(p)) > 0$, then for every $q \in D_p \cap X_+$ close enough, we get that $h(t(q))$ is a real number plus a number in $\C_+$ plus a small error term. Therefore, in particular, every $q \in D_p \cap X_+$ close enough to $p$ is mapped to $\C_+$. Similarly, if $h^{\prime}(t(p))$ is negative, then every $q \in D_p \cap X_+$ close enough to $p$ is mapped to $\C_-$. Shrinking $D$ even further, we conclude that for every $\lambda \in \C_+ \cap D$, if $h^{\prime}(t(p)) >0$, $\lambda$ has a preimage in $X_+$ and if $h^{\prime}(t(p)) < 0$, then $\lambda$ has a preimage in $X_-$. By Theorem \ref{thm:signature} we know that $\sigma J B_{\chi}(f,g)$ is precisely the difference between the number of points $p \in h^{-1}(\mu) \cap X(\R)$, such that the derivative of $h$ with respect to $t$ is positive and the number of points where the derivative is negative. Therefore, we obtain \eqref{eq:argument}.

\textbf{Second Approach:} Consider a new function on $X$ defined by $\varphi = \frac{h -\lambda}{h - \overline{\lambda}}$. Note that $\varphi$ is a composition of $h$ with a M\"obius map, that maps the upper half-plane onto the disc. Define a meromorphic differential with simple poles on $X$ by $\omega = \frac{d \varphi}{\varphi}$. For each circle $Y \subset X(\R)$ we have that $\frac{1}{2 \pi i} \int_Y \omega$ is precisely the winding number of $\varphi(Y)$ around the origin. By Theorem \ref{thm:from_sig_to_coh} we know that the sum of the winding numbers is precisely $\sigma J B_{\chi}(f,g)$. On the other hand, $\frac{1}{2 \pi i} \int_Y \omega$ is the number of zeroes of $\varphi$ in $X_+$ counting multiplicities minus the number of poles of $\varphi$ in $X_+$ counting multiplicities. To see this note first that the poles of $\omega$ are precisely the zeroes and poles of $\varphi$ and that the residues are the order. Let $U$ be an open neighborhood of $\overline{X_+}$, where there are no additional zeroes or poles of $\varphi$, except for those in $X_+$ itself. The form $\omega$ is holomorphic on $U$ without the poles, therefore it is closed and we can replace the integral by the integrals on small circles around each pole of $\omega$. Now to conclude the proof note that the poles of $\varphi$ are precisely the points in the fiber of $h$ over $\overline{\lambda}$ and the zeroes are the fiber of $h$ over $\lambda$. Therefore, we get that
\[
\sigma J B_{\chi}(f,g) = \sum_{p \in h^{-1}(\lambda) \cap X_+} e(p) - \sum_{p \in h^{-1}(\overline{\lambda}) \cap X_+} e(p).
\]
Since $h$ is real we have that $h(p) = \lambda$ if and only if $h(p^{\tau}) = \overline{\lambda}$. Hence, $p \in h^{-1}(\overline{\lambda}) \cap X_+$ if and only if $p^{\tau} \in h^{-1}(\lambda) \cap X_-$ and thus $\sum_{p \in h^{-1}(\overline{\lambda}) \cap X_+} e(p) = \sum_{p \in h^{-1}(\lambda) \cap X_-} e(p)$. This concludes the proof.
\end{proof}

\begin{rem} \label{rem:dividing}
Recall from \cite{SV14} that a real meromorphic function $h$ on a real Riemann surface of dividing type is called dividing if $p \in X(\R)$ if and only if $h(p) \in \R \cup \{\infty\}$. In \cite[Proposition 5.7]{SV14} it is proved that if $h = f/g$, with $f$ and $g$ real meromorphic functions with simple real poles, then the Bezoutian is definite. The above theorem allows us to generalize this to general real $f$ and $g$. Namely, let $h = f/g$, with $f$ and $g$ real, if $h$ is dividing, then $h(X_+) = \C_+$ or $h(X_+) = \C_-$, thus $\sigma J B_{\chi}(f,g) = \deg h$ or $\sigma J B_{\chi}(f,g) = - \deg h$. Since $\deg h$ is the rank of this Hermitian matrix we see that $J B_{\chi}(f,g)$ is respectively positive or negative semidefinite. Conversely, if $J B_{\chi}(f,g)$ is without loos of generality positive semi-definite, then by the above theorem, the preimage of $\C_+$ is in $X_+$, but since the function is real, they have to coincide. Conclude that $h$ is dividing.
\end{rem}

We need a few corollaries of the above theorem in order to apply the theory of Bezoutians to polynomials on quadrature domains. To every meromorphic function $f$ on $X$, we associate two real meromorphic functions, the real part $f_r = \frac{1}{2} (f + f^{\tau})$ and the imaginary part $f_i = \frac{1}{2 i}(f - f^{\tau})$. Let $D_f = (f)_{\infty} \vee (f^{\tau})_{\infty}$. It is clear that $D_f$ is a real divisor and that $f_r, f_i \in \cL(D)$. Furthermore, note that if $p \in X(\R)$ is a pole of $f$ of order $m$, then $p$ is a pole of $f^{\tau}$ of the same order and thus $p$ is a pole of both $f_r$ and $f_i$ of order at most $m$. Furthermore, if $p$ is a pole of $f_r$ of order less than $m$, then $p$ is a pole of $f_i$ of order precisely $m$ and vice verse. Assume now that $p$ and $p^{\tau}$ are both poles of $f$ of orders $m$ and $n$, respectively, Then if $m \neq n$, then both points are poles of $f_r$ and $f_i$ of order $\max\{m,n\}$. If $m = n$, then as in the case of a real pole if there is cancellation in $f_r$ and $p$ is a pole of order less than $m$ of $f_r$, then it is a pole of order precisely $m$ of $f_i$ and vice verse. The same applies to $p^{\tau}$. Thus $D_f = (f_r)_{\infty} \vee (f_i)_{\infty}$. From now on the Bezoutians will be calculated with respect to the divisor $D_f$.
\begin{cor} \label{cor:cayley}
Let $f$ be a meromorphic function on $X$ and set $F = \frac{f^{\tau}}{f}$, then
\[
\sigma J B(f_r,f_i) = \sum_{p \in F^{-1}(0) \cap X_+} e(p) - \sum_{p \in F^{-1}(0) \cap X_-} e(p)
\]
Furthermore, $\dim \ker B_{\chi}(f_r,f_i) = (f)_0 \wedge (f^{\tau})_0$.
\end{cor}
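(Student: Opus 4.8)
The plan is to reduce the statement to Theorem~\ref{thm:argument} and Lemma~\ref{lem:Bezoutian_nullity} by recognizing the quotient $f_r/f_i$ as the composition of $F$ with a fixed Cayley transform. If $f_r$ and $f_i$ are proportional (equivalently, $F$ is constant), then $B_\chi(f_r,f_i)=0$ and the corollary is immediate, since then $F^{-1}(0)=\emptyset$ while $\dim\ker 0=\deg D_f=\deg\big((f)_0\wedge(f^\tau)_0\big)$; so assume below that $f_r$ and $f_i$ are linearly independent.

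\textbf{Step 1: normalization.} Using the $\operatorname{SL}_2(\R)$-invariance of the Bezoutian recorded at the start of Section~\ref{sec:bezoutians}, replace $(f_r,f_i)$ by a \emph{generic} real combination $(\tilde f,\tilde g)=(af_r+bf_i,\,cf_r+df_i)$ with $ad-bc=1$. Since $D_f=(f_r)_\infty\vee(f_i)_\infty$, for a generic matrix no leading Laurent coefficients cancel at the finitely many relevant points, so $\nu_p(\tilde f)=\nu_p(\tilde g)=\min\!\big(\nu_p(f_r),\nu_p(f_i)\big)$ for all $p\in X$; in particular $(\tilde f)_\infty=(\tilde g)_\infty=D_f$, which is the standing pole hypothesis of Section~\ref{sec:main_result}. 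As $B_\chi(\tilde f,\tilde g)=(ad-bc)\,B_\chi(f_r,f_i)=B_\chi(f_r,f_i)$, it suffices to prove both assertions for $(\tilde f,\tilde g)$.

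\textbf{Step 2: the Cayley transform and the argument principle.} Compute
\[
h:=\frac{f_r}{f_i}=\frac{\tfrac12(f+f^\tau)}{\tfrac{1}{2i}(f-f^\tau)}=i\,\frac{f+f^\tau}{f-f^\tau}=i\,\frac{1+F}{1-F},
\]
so $h=m\circ F$ with $m(w)=i\frac{1+w}{1-w}$ the Cayley transform mapping the open unit disc biholomorphically onto $\C_+$; in particular $m(0)=i$. Writing $M$ for the real Möbius map $w\mapsto\frac{aw+b}{cw+d}$, which preserves $\C_+$ because $ad-bc=1$, one gets $\tilde h:=\tilde f/\tilde g=M(h)=(M\circ m)\circ F$. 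Apply Theorem~\ref{thm:argument} to $\tilde f,\tilde g$ at the value $\lambda:=M(i)\in\C_+$: the fiber divisor is $\tilde h^{-1}(\lambda)=F^{-1}\!\big((M\circ m)^{-1}(\lambda)\big)=F^{-1}(0)$, and since $M\circ m$ is biholomorphic near $0\mapsto\lambda$, the multiplicity $e(p)$ of $p$ in this fiber equals the order of vanishing of $F$ at $p$. Substituting into \eqref{eq:argument} and using $B_\chi(\tilde f,\tilde g)=B_\chi(f_r,f_i)$ gives the first identity.

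\textbf{Step 3: the kernel, and the main obstacle.} By Lemma~\ref{lem:Bezoutian_nullity} applied to $(\tilde f,\tilde g)$, $\dim\ker B_\chi(f_r,f_i)=\dim\ker B_\chi(\tilde f,\tilde g)=\deg\big((\tilde f)_0\wedge(\tilde g)_0\big)$; it remains to identify this divisor. From $f=f_r+if_i$, $f^\tau=f_r-if_i$ and the inverse relations, $\min\!\big(\nu_p(f),\nu_p(f^\tau)\big)=\min\!\big(\nu_p(f_r),\nu_p(f_i)\big)$ at every $p$, and combining this with the elementary identity $\min(\max(a,0),\max(b,0))=\max(\min(a,b),0)$ and the order computation of Step~1 yields $(\tilde f)_0\wedge(\tilde g)_0=(f_r)_0\wedge(f_i)_0=(f)_0\wedge(f^\tau)_0$, which is the second assertion (its right-hand side read as the degree of this divisor). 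I expect the only genuinely delicate point to be the divisor bookkeeping of Steps~1 and~3 — checking that a generic $\operatorname{SL}_2(\R)$-combination really carries the full pole divisor $D_f$ and the prescribed common zero divisor, so that Theorem~\ref{thm:argument} and Lemma~\ref{lem:Bezoutian_nullity} apply verbatim; the Cayley/Möbius computation of Step~2 is purely formal.
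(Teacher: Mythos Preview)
Your argument is correct and matches the paper's: recognize $h=f_r/f_i=i(1+F)/(1-F)$ as the Cayley transform of $F$, apply Theorem~\ref{thm:argument} at the point $i$ (resp.\ $M(i)$), and identify $(f_r)_0\wedge(f_i)_0=(f)_0\wedge(f^\tau)_0$ to read off the kernel via Lemma~\ref{lem:Bezoutian_nullity}/Theorem~\ref{thm:signature}. Your explicit $\operatorname{SL}_2(\R)$-normalization in Step~1 is a welcome bit of extra rigor the paper glosses over; the only slip is the assertion that $\nu_p(\tilde f)=\min(\nu_p(f_r),\nu_p(f_i))$ for \emph{all} $p\in X$ (generically $\tilde f$ acquires new simple zeros at points where $f_r,f_i$ are both nonvanishing), but you only use this identity at the finitely many poles and common zeros, where it is valid, and the invertibility of the $\operatorname{SL}_2$ matrix guarantees no spurious common zeros of $(\tilde f,\tilde g)$ arise elsewhere.
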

\begin{proof}
Let $h = \frac{f_r}{f_i} = i \frac{1 + F}{1 - F}$. The last equality implies that $h$ is the composition of $F$ with the Cayley transform that takes the disc to the upper half-plane and the origin to $i$. By Theorem \ref{thm:argument} we know that:
\[
\sigma J B_{\chi}(f_r, f_i) = \sum_{p \in h^{-1}(i)\cap X_+} e(p) - \sum_{p \in h^{-1}(i) \cap X_-} e(p).
\]
This proves the first claim. To see the second claim note that each $p \in X(\R)$ that is a zero of $f$ of order $m$ is also a zero of $f^{\tau}$ of order $m$ and thus is a common zero of $f_r$ and $f_i$ of order $m$. If $p \in X \setminus X(\R)$ is a common zero of $f$ and $f^{\tau}$ it implies that both $p$ and $p^{\tau}$ are zeroes of $f$ and letting $m = \min\{\ord_p f, \ord_{p^{\tau}} f\}$ we see again that both $p$ and $p^{\tau}$ are common zeroes of $f_r$ and $f_i$ of order $m$. Applying Theorem \ref{thm:signature} we get the second claim.
\end{proof}

Now assume that $f$ has no poles in $X_+$. Therefore, the common poles of $f$ and $f^{\tau}$ are real and the rest are disjoint and conjugate to each other. Note that for every $p \in X$ we have that $\ord_p(F) = \ord_{p^{\tau}}(f) - \ord_p(f)$. In particular, the only possible zeroes of $F$ in $X_+$ are the zeroes of $f$ in $X_-$ that are not canceled by conjugate zeroes in $X_+$. Let us associate four numbers with $f$. Let $a_+$ be the number of zeroes $p$ of $f$ in $X_+$ counting multiplicity, such that $p^{\tau}$ is not a zero of $f$. Similarly, let $a_-$ be the number of zeroes of $f$ in $X_-$ counting multiplicity, such that their conjugate is not a zero of $f$. Let $b_+$ be the number of zeroes $p$ of $f$ in $X_+$, such that $p^{\tau}$ is also a zero of $f$, but $\ord_p f > \ord_{p^{\tau}} f$ counted with the multiplicity $\ord_p f - \ord_{p^{\tau}} f$. Let $b_-$ be the same quantity defined for $X_-$. Let us write $\pi$ and $\nu$ for the number of positive and negative eigenvalues of $J B_{\chi}(f_r,f_i)$, respectively. The following two corollaries describe the two extreme cases for the degree of $D_f$, namely $\deg D_f = \deg f$ and $\deg D_f = 2 \deg f$.
\begin{cor} \label{cor:classic}
Let $f$ be a meromorphic function on $X$ and assume that all of the poles of $f$ are real, then $D_f = (f)_{\infty}$. Then $a_+ + b_+ = \nu$. If in addition, $f$ has no zeroes on $X(\R)$, then the number of zeroes of $f$ in $X_+$ is $\nu + \frac{n_0}{2} = \frac{\deg f - \sigma}{2}$.
\end{cor}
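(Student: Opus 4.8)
The plan is to read the statement off from Corollary~\ref{cor:cayley} applied to $F=f^\tau/f$, supplemented by rank--nullity bookkeeping for the self-adjoint matrix $JB_\chi(f_r,f_i)$. The crucial point to exploit is that, although Corollary~\ref{cor:cayley} literally computes only the signature, it in fact determines $\pi$ and $\nu$ separately once one observes that $\pi+\nu=\rank JB_\chi(f_r,f_i)$ is also controlled by that corollary (via the nullity $n_0$) and by the elementary fact that a point of $X(\R)$ cannot be a zero of $F$.

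First I would dispose of the claim $D_f=(f)_\infty$: since every pole of $f$ lies on $X(\R)$ and such a point is fixed by $\tau$, the functions $f$ and $f^\tau$ have the same pole divisor, so $D_f=(f)_\infty\vee(f^\tau)_\infty=(f)_\infty$ and $\deg D_f=\deg f$. This also gives $\ord_p f\ge 0$ for every $p\notin X(\R)$, a fact I would use repeatedly below.

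Next I would isolate $\nu$. Corollary~\ref{cor:cayley} gives $\pi-\nu=\sum_{p\in F^{-1}(0)\cap X_+}e(p)-\sum_{p\in F^{-1}(0)\cap X_-}e(p)$ together with $n_0=\deg\bigl((f)_0\wedge(f^\tau)_0\bigr)$. On the other hand $\pi+\nu=\rank JB_\chi(f_r,f_i)=\deg D_f-n_0=\deg f-n_0$, and a one-line divisor count using $\ord_p F=\ord_{p^\tau}f-\ord_p f$ (and that $F$ has no zero on $X(\R)$) identifies $\deg f-n_0$ with $\sum_{p\in F^{-1}(0)}e(p)=\sum_{p\in F^{-1}(0)\cap X_+}e(p)+\sum_{p\in F^{-1}(0)\cap X_-}e(p)$. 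Adding and subtracting then yields $\pi=\sum_{p\in F^{-1}(0)\cap X_+}e(p)$ and $\nu=\sum_{p\in F^{-1}(0)\cap X_-}e(p)$. The remaining work is combinatorial: a point $p\in X_-$ is a zero of $F$ exactly when $q=p^\tau\in X_+$ is a zero of $f$ with $\ord_q f>\ord_{q^\tau}f$, contributing weight $\ord_q f-\ord_{q^\tau}f$; splitting according to whether $\ord_{q^\tau}f$ vanishes or not reproduces precisely the sums defining $a_+$ and $b_+$, so $\nu=a_++b_+$. I expect this matching of weights $\max\{\ord_q f-\ord_{q^\tau}f,0\}$, and checking that no real point slips into any of these sums, to be the only place where real care is needed.

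Finally, for the zero count under the additional hypothesis that $f$ has no zeroes on $X(\R)$, pairing $q\in X_+$ with $q^\tau\in X_-$ gives $n_0=\deg\bigl((f)_0\wedge(f^\tau)_0\bigr)=2s$, where $s=\sum_{q\in X_+}\min\{\ord_q f,\ord_{q^\tau}f\}$. Then the identity $\ord_q f=\min\{\ord_q f,\ord_{q^\tau}f\}+\max\{\ord_q f-\ord_{q^\tau}f,0\}$, summed over $q\in X_+$, shows that the number of zeroes of $f$ in $X_+$ (with multiplicity) equals $s+(a_++b_+)=\tfrac{n_0}{2}+\nu$; and combining $\pi+\nu+n_0=\deg f$ with $\sigma=\pi-\nu$ turns $\tfrac{\deg f-\sigma}{2}$ into $\tfrac{n_0+2\nu}{2}=\nu+\tfrac{n_0}{2}$, which matches and closes the argument. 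The only inputs beyond Corollary~\ref{cor:cayley} are these two elementary divisor identities and the relation $\rank JB_\chi(f_r,f_i)=\deg D_f-n_0$.
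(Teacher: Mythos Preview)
Your proof is correct and follows essentially the same route as the paper's: both arguments combine the signature formula from Corollary~\ref{cor:cayley} with the rank--nullity relation $\pi+\nu+n_0=\deg D_f=\deg f$ and the combinatorial identification of the zeroes of $F$ in $X_{\pm}$ with $a_{\mp}+b_{\mp}$. The only cosmetic difference is the order of operations---you first isolate $\pi$ and $\nu$ separately (via the identity $\deg f-n_0=\deg F$) and then match $\nu$ against $a_++b_+$, whereas the paper makes the combinatorial identification first and then solves the resulting linear system; the content is the same.
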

\begin{proof}
Since all of the poles of $f$ are real they do not contribute to the zeroes of $F$ and thus the number of zeroes of $F$ in $X_+$ is $a_- + b_-$ and the number of zeroes in $X_-$ is $a_+ + b_+$. Therefore, by the first part of Corollary \ref{cor:cayley} we see that:
\[
\sigma = a_- + b_- - a_+ - b_+.
\]
Additionally, $\deg f = a_- + b_- + a_+ + b_+ + n_0$, where $n_0 = \dim \ker B_{\chi}(f_r,f_i)$, by the second part of Corollary \ref{cor:cayley}.  Subtracting the two equation we get that $\deg f - \sigma = 2 a_+ + 2 b_+ + n_0$. Now since the size of the matrix $J B_{\chi}(f_r,f_i)$ is $\deg D_f = \deg f$, we see that $\sigma = \pi - \nu$ and $\deg f = n_0 + \pi + \nu$. Therefore, we conclude that $a_+ + b_+ = \nu$.

If $f$ has no zeroes on $X(\R)$, then $n_0$ is even, since the common zeroes of $f$ and $f^{\tau}$ come in conjugate pairs. Therefore, the number of zeroes of $f$ in $X_+$ is precisely $a_+ + b_+ + \frac{n_0}{2}$ and we get the second formula in the claim.
\end{proof}
\begin{cor} \label{cor:apply}
Let $f$ be a meromorphic function on $X$ that has no poles in $\overline{X_+}$, then $a_+ + b_+ = \nu - \deg f$. If in addition, $f$ has no zeroes on $X(\R)$, then the number of zeroes of $f$ in $X_+$ is $\frac{- \sigma}{2}$.
\end{cor}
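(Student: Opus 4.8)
The plan is to repeat the proof of Corollary~\ref{cor:classic}, the one essential change being that the poles of $f$, which now all lie in $X_-$ rather than on $X(\R)$, do contribute to the fibre count: each pole of $f$ produces a zero of $F=f^\tau/f$ inside $X_-$, and together these account for an extra $\deg f$ on the negative side. First I would record the structural facts. Since $f$ has no poles in $\overline{X_+}=X_+\cup X(\R)$, it has no real poles, so $(f)_\infty$ and $(f^\tau)_\infty$ have disjoint supports (in $X_-$ and $X_+$ respectively); hence $D_f=(f)_\infty+(f^\tau)_\infty$ has degree $2\deg f$, the matrix $JB_\chi(f_r,f_i)$ has size $2\deg f$, and therefore $\pi+\nu+n_0=2\deg f$, where $n_0=\dim\ker B_\chi(f_r,f_i)$.

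Next I would count the zeroes of $F=f^\tau/f$ using $\ord_pF=\ord_{p^\tau}f-\ord_pf$. For $p\in X_+$ one has $\ord_pf\ge0$, and the same case split as in Corollary~\ref{cor:classic} (according to whether $p$, and whether $p^\tau$, is a zero of $f$) shows that the zeroes of $F$ in $X_+$, counted with multiplicity, total $a_-+b_-$. For $p\in X_-$ the analysis is identical except that a pole $p$ of $f$, of order $m_p$, now gives $\ord_pF=m_p+\ord_{p^\tau}f>0$ (note $\ord_{p^\tau}f\ge0$ since $p^\tau\in X_+$), i.e.\ a zero of $F$ in $X_-$; summing the $m_p$ over the poles of $f$ contributes exactly $\deg f$, while the accompanying orders $\ord_{p^\tau}f$ together with the non-pole cases reproduce $a_++b_+$. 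Hence the zeroes of $F$ in $X_-$ total $\deg f+a_++b_+$, and the first formula of Corollary~\ref{cor:cayley} gives
\[
\sigma=(a_-+b_-)-(\deg f+a_++b_+).
\]

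To finish, I would use the degree identity $\deg f=n_0+(a_++b_+)+(a_-+b_-)$, which holds for the same reason as in Corollary~\ref{cor:classic}: by the second part of Corollary~\ref{cor:cayley} the common zeroes of $f$ and $f^\tau$ account for $n_0$, while the remaining zeroes are counted by $a_\pm,b_\pm$ (each zero of $f$ in $X_\pm$ splits as its common part with $f^\tau$ plus an excess of type $a$ or $b$). Subtracting this from the signature identity above gives $a_++b_+=-\tfrac12(n_0+\sigma)$; and since $\sigma=\pi-\nu$ and $\pi+\nu=2\deg f-n_0$, we get $\nu=\deg f-\tfrac12(n_0+\sigma)$, hence $a_++b_+=\nu-\deg f$. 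Finally, if $f$ has no zeroes on $X(\R)$ then the common zeroes of $f$ and $f^\tau$ occur in conjugate pairs, so $n_0$ is even and exactly $n_0/2$ of that multiplicity lies in $X_+$; adding the excess $a_++b_+$ shows that $f$ has $\tfrac{n_0}{2}+(a_++b_+)=-\sigma/2$ zeroes in $X_+$.

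The main obstacle is the bookkeeping in the second step. One must be careful that $a_+$, by definition, already lumps together the zeroes of $f$ in $X_+$ whose conjugate is a genuine non-zero non-pole and those whose conjugate is a \emph{pole} of $f$, so that the extra contribution $\sum_{p\text{ pole}}\ord_{p^\tau}f$ produced by the poles of $f$ is exactly absorbed into $a_+$ and no multiplicity is double-counted. Once that attribution is pinned down, the remainder is the linear algebra already carried out in Corollary~\ref{cor:classic}.
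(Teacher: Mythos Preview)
Your proposal is correct and follows essentially the same approach as the paper: invoke Corollary~\ref{cor:cayley} to obtain $\sigma=(a_-+b_-)-(a_++b_++\deg f)$, combine with the degree identity $\deg f=n_0+a_++b_++a_-+b_-$ and the size relation $\pi+\nu+n_0=2\deg f$, and solve. Your treatment is in fact slightly more careful than the paper's one-line ``the poles all contribute to the zeroes of $F$'': you correctly track that a pole $q\in X_-$ gives $\ord_qF=m_q+\ord_{q^\tau}f$, and that the extra $\ord_{q^\tau}f$ is already accounted for in $a_+$ (since $q^\tau\in X_+$ is a zero of $f$ whose conjugate is a pole, hence not a zero), so nothing is double-counted.
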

\begin{proof}
We proceed as in the proof of the previous corollary. However, $\deg D_f = 2 \deg f$ and since the poles of $f$ are concentrated in $X_-$, they all contribute to the zeroes of $F$. Hence we get that
\[
\sigma = a_- + b_- - a_+ - b_+ - \deg f.
\]
Now again $\deg f = a_+ + b_+ + a_- + b_- + n_0$ and subtracting we get that $-\sigma = 2 a_+ + 2 b_+ + n_0$. However , in this case $2 \deg f = \pi + \nu + n_0$ and since $\sigma = \pi - \nu$ ,we have that 
\[
a_+ + b_+ = \frac{- \sigma - n_0}{2} = \frac{- \pi - \nu -n_0 + 2 \nu}{2} = \nu - \deg f.
\]
The second formula follows as in the preceding proof.
\end{proof}

\section{Application to Quadrature Domains} \label{sec:quadrature}

A quadrature domain $U \subset \C$ is a domain, such that there exists finitely many points $a_1,\ldots,a_k \in U$ , non-negative integers $m_1,\ldots,m_k$ and constants $c_{ij}$ for $1 \leq i \leq k$, $0 \leq j \leq m_i$, such that for every function analytic and absolutely integrable in $U$, we have that $\int_{U} f = \sum_{i=1}^d \sum_{j=0}^{m_i} c_{ij} f^{(j)}(a_i)$. As mentioned in the introduction, there exists a meromorphic function on the Schottky double $X$ of $U$, that maps $X_+$ conformally onto $U$. This can be also viewed as extending the coordinate function $z$ on $U$ to a meromorphic function on $X$. We will denote this meromorphic function by $\varphi$. Fix $\chi \in \T_0$ and $\Delta$ a line bundle of half-order differentials on $X$ as above. Let $D_{\varphi} = (\varphi)_{\infty} \vee (\varphi^{\tau})_{\infty}$. Since the poles of $\varphi$ are in $X_-$, then $D_{\varphi} = (\varphi)_{\infty} + (\varphi^{\tau})_{\infty}$. Let $p$ be a polynomial of degree $n$. We would like to be able to count the number of zeroes of $p$ in $U$. To this end, we note that $p(\varphi)$ is a meromorphic function on $X$ and we can use the results of Section \ref{sec:main_result} to fulfill our goals. Since for every meromorphic function on $X$ we have $f = f_r + i f_i$, by the properties of the Bezoutian we have that $B_{\chi}(f,f^{\tau}) = 2 i B_{\chi}(f_r,f_i)$. Let $p(z) = \sum_{j=0}^n a_j z^j$, then $B_{\chi}(p(\varphi),p(\varphi)^{\tau}) = \sum_{j,\ell =0}^n a_j \overline{a_{\ell}} B_{\chi}(\varphi^j,\varphi^{\tau \ell})$. Here the Bezoutians are taken with respect to the divisor $n D_{\varphi}$ and whenever both $j$ and $\ell$ are less than $n$, we complement by zeroes. Lastly, we note that $J$ consists just of blocks of the form $\left( \begin{smallmatrix} 0 & 1 \\ 1 & 0 \end{smallmatrix} \right)$. The following result follows readily from Corollary \ref{cor:apply}.

\begin{thm} \label{thm:count_quadrature}
Let $p = \sum_{j=0}^n a_j z^j$ be a polynomial and $U$ a quadrature domain with Schottky double $X$. Let $\varphi$ be the meromorphic function that identifies $X_+$ with $U$ and $D_{\varphi} = (\varphi)_{\infty} + (\varphi^{\tau})_{\infty}$. Then $p(\varphi)$ and $p(\varphi^{\tau})$ have $\dim \ker B_{\chi}(p(\varphi),p(\varphi^{\tau}))$ zeroes in common counting multiplicities. Moreover, $p$ has additional $\nu - n \deg \varphi$ zeroes in $U$, where $\nu$ is the number of negative eigenvalues of $\frac{1}{2i} J B_{\chi}(p(\varphi),p(\varphi^{\tau}))$. If additionally, $p$ has no zeroes on $\partial U$, then the number of zeroes in $U$ is $-\sigma/2$, where $\sigma$ is the signature of $\frac{1}{2i} J B_{\chi}(p(\varphi),p(\varphi^{\tau}))$. We can compute the matrix $B_{\chi}(p(\varphi),p(\varphi^{\tau}))$ by just knowing the matrices $B_{\chi}(\varphi^j,\varphi^{\tau \ell})$ for $0 \leq j,\ell \leq n$.
\end{thm}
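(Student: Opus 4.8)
The plan is to apply Corollary \ref{cor:apply} to the meromorphic function $f=p(\varphi)$ on $X$ and then transport the resulting counts on $X$ to statements about the zeroes of $p$ on $U$ by means of the conformal identification $\varphi|_{X_+}\colon X_+\to U$. First I would verify the hypothesis of Corollary \ref{cor:apply}, namely that $f=p(\varphi)$ has no poles in $\overline{X_+}$: since $\varphi$ maps $X_+$ conformally onto the bounded domain $U$ and carries $X(\R)$ onto $\partial U$, it is holomorphic on a neighbourhood of $\overline{X_+}$ and all of its poles lie in $X_-$, and the same is then true of $p(\varphi)$. Because $\deg p=n$ (i.e. $a_n\neq 0$) we get $(f)_\infty=n(\varphi)_\infty$, hence $\deg f=n\deg\varphi$ and $D_f=(f)_\infty+(f^\tau)_\infty=n(\varphi)_\infty+n(\varphi^\tau)_\infty=nD_\varphi$; in particular $D_f$ is supported away from $X(\R)$, so the signature matrix $J$ attached to $D_f$ and $\chi\in\T_0$ consists only of blocks $\left(\begin{smallmatrix}0&1\\1&0\end{smallmatrix}\right)$.

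Next I would set up the dictionary between $f$ and $p$. Since $\varphi|_{X_+}$ is a biholomorphism, a point $q\in X_+$ is a zero of $f$ of order $m$ exactly when $\varphi(q)\in U$ is a zero of $p$ of order $m$; thus the zeroes of $p(\varphi)$ in $X_+$, counted with multiplicity, are in bijection with the zeroes of $p$ in $U$, and likewise zeroes of $p(\varphi)$ on $X(\R)$ correspond to zeroes of $p$ on $\partial U$. Writing $f=f_r+if_i$ with $f_r,f_i$ real, linearity and antisymmetry of the Bezoutian (Proposition \ref{prop:bezoutian_formula}) give $B_\chi(p(\varphi),p(\varphi)^\tau)=B_\chi(f,f^\tau)=2i\,B_\chi(f_r,f_i)$; as $f_r,f_i$ are real, $\tfrac1{2i}JB_\chi(p(\varphi),p(\varphi)^\tau)=JB_\chi(f_r,f_i)$ is a genuine $J$-Hermitian matrix, and its inertia $(\pi,\nu,n_0)$ is exactly the triple appearing in the statement. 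Expanding $f=\sum_j a_j\varphi^j$, $f^\tau=\sum_\ell\overline{a_\ell}\varphi^{\tau\ell}$ and using bilinearity once more yields $B_\chi(p(\varphi),p(\varphi)^\tau)=\sum_{j,\ell=0}^{n}a_j\overline{a_\ell}\,B_\chi(\varphi^j,\varphi^{\tau\ell})$, each $B_\chi(\varphi^j,\varphi^{\tau\ell})$ being formed relative to $nD_\varphi$ (completing with zero blocks when $j$ or $\ell$ is less than $n$); this proves the final assertion of the theorem.

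Finally I would read off Corollary \ref{cor:apply}. It gives $\dim\ker B_\chi(p(\varphi),p(\varphi)^\tau)=\dim\ker B_\chi(f_r,f_i)=\deg((f)_0\wedge(f^\tau)_0)$, which is the number of common zeroes of $p(\varphi)$ and $p(\varphi)^\tau$ counted with multiplicity, and $a_++b_+=\nu-\deg f=\nu-n\deg\varphi$, where $a_++b_+$ is the multiplicity-count of the zeroes of $f$ in $X_+$ not lying in $(f)_0\wedge(f^\tau)_0$ — these are the ``additional'' zeroes of $p$ in $U$. If in addition $p$ has no zero on $\partial U$, then $f$ has no zero on $X(\R)$, so $(f)_0\wedge(f^\tau)_0$ is a sum of conjugate pairs lying in $X_+\sqcup X_-$; hence $n_0$ is even and its weight over $X_+$ is $n_0/2$, the number of zeroes of $p$ in $U$ equals $a_++b_++n_0/2$, and combining $\sigma=\pi-\nu$ with $2\deg f=\pi+\nu+n_0$ gives $a_++b_++n_0/2=-\sigma/2$.

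I expect no serious obstacle, as the statement is essentially a specialization of the results of Section \ref{sec:main_result}; the one place that requires care is this last step, namely tracking how, at a conjugate pair $\{q,q^\tau\}$ of common zeroes with $\ord_q f\neq\ord_{q^\tau}f$, the minimum of the two orders is absorbed into $\dim\ker B_\chi(f_r,f_i)$ while the excess is counted by $b_+$ or $b_-$. This is exactly the bookkeeping already carried out in the proof of Corollary \ref{cor:apply}, so here it is only a matter of transcription together with the observation that $\varphi|_{X_+}$ preserves multiplicities of zeroes.
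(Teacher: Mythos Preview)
Your proposal is correct and follows the same approach as the paper: the theorem is stated there as following readily from Corollary \ref{cor:apply}, and your writeup simply supplies the details of that application (verifying that $f=p(\varphi)$ has all poles in $X_-$, computing $\deg f=n\deg\varphi$, identifying $\tfrac{1}{2i}JB_\chi(p(\varphi),p(\varphi)^\tau)$ with $JB_\chi(f_r,f_i)$, and using the biholomorphism $\varphi|_{X_+}$ to transfer zero counts from $X_+$ to $U$). The bilinearity expansion and the observation about the structure of $J$ are also exactly what the paper records in the paragraph preceding the theorem.
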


Next, we will use this theorem to calculate the number of zeroes of some polynomials in simply connected quadrature domains. We intend to pursue explicit computations of the higher genus case in future work.


\subsection{Domains of Genus 0} \label{sec:genus_0}

In this section, we will give examples of simply connected quadrature domains and find the number of zeroes of polynomials in them using the methods described above. First, we note that since the domain is simply connected, $X$ is of genus $0$ and thus is a Riemann sphere. The Cauchy kernel on the Riemann sphere is $K(t,s) = \frac{\sqrt{dt} \sqrt{ds}}{t - s}$. Now the Cauchy kernel and its derivatives at any point $\lambda \in \C$ are simply $\frac{\sqrt{dt}}{t - \lambda},\frac{\sqrt{dt}}{(t - \lambda)^2},\ldots, \frac{(n-1)! \sqrt{dt}}{(t - \lambda)^n}$. Now given a polynomial $f$ we have that $f^{\tau}(t) = \overline{ f( \overline{z} )}$.

In the case of genus $0$, the function $\varphi$ is a rational function mapping the upper half-plane conformally onto the quadrature domain. If the poles of $\varphi$ are $a_1,\ldots,a_r$ with multiplicities $m_1,\ldots,m_r$, respectively, then we can write $\varphi(t)^j = \frac{P(t)}{Q(t)} = \sum_{p=1}^r \sum_{q=1}^{j m_p} \frac{\alpha_{pq}}{(t - a_p)^q}$. Thus the divisor of poles $D = j \sum_{p=1}^r m_p a_p + j \sum_{p=1}^r m_p \overline{a_r}$. We now compute
\begin{multline*}
b(\varphi^j, \varphi^{\tau \ell}) = \frac{\frac{P^j(t)}{Q^j(t)} \frac{\overline{P^{\ell}(\overline{s})}}{\overline{Q^{\ell}(\overline{s})}} - \frac{P^j(s)}{Q^j(s)} \frac{\overline{P^{\ell}(\overline{t})}}{\overline{Q^{\ell}(\overline{t})}}}{t-s} = \sum_{p,q,p',q'} \frac{\frac{\alpha_{pq} \overline{\alpha_{p'q'}}}{(t - a_p)^q)(s - \overline{a_{p}})^{q'}} - \frac{\alpha_{pq} \overline{\alpha_{p'q'}}}{(s - a_p)^q)(t - \overline{a_{p}})^{q'}}}{t - s} = \\
\sum_{p,q,p',q'} \alpha_{pq} \overline{\alpha_{p'q'}} \frac{(s - a_p)^q)(t - \overline{a_{p}})^{q'} - (t - a_p)^q)(s - \overline{a_{p}})^{q'}}{(t-s)(s - a_p)^q)(t - \overline{a_{p}})^{q'})(t - a_p)^q)(s - \overline{a_{p}})^{q'}}
\end{multline*}
Therefore, the calculation can be reduced to the partial fraction decomposition of each summand. Note that the classical Bezoutian appears in the numerator once $t-s$ is canceled from the denominator. We will call the matrix $\frac{1}{2i} J B_{\chi}(p,p^{\tau})$, the Bezoutian or the Bezout matrix associated to $p$. 

A program\footnote{The code can be obtained from \url{https://uwaterloo.ca/scholar/eshamovi/software}} that calculates the Bezoutian associated to a polynomial with respect to a quadrature domain $U$ described by a function $\varphi$ mapping conformally the upper half-plane onto $U$ was implemented in Maple\footnote{Maple is a trademark of Waterloo Maple Inc.} \cite{maple}. The matrices appearing in the following subsections are obtained using this program. 

\begin{ex} \label{ex:disc}
The simplest and chronologically first example of a quadrature domain is the disc. Although one can establish the stability of a given polynomial with respect to the unit disc using the Schur-Cohn theory (see \cite[Theorem 43.1]{Mar66}), we will use it as our first example for its simplicity. We take $\varphi$ to be the Cayley transform $\varphi(z) = \frac{z - i}{z + i}$. Calculating the Bezoutian is easy since the only expressions that appear are of the form:
\[
\frac{(s-i)^j (t + i)^{\ell} - (t - i)^j (s + i)^{\ell}}{(t-s)(s-i)^j (t - i)^j (s +i)^{\ell} (t+i)^{\ell}}.
\]

Let $p(z) = z^4 + 5 z^3 -2 z^2 + 3 z -4$, then this polynomial has $3$ roots in $\D$ and one outside. We have that $\frac{1}{2i} J B_{\chi}(p,p^{\tau})$ is:
\[
\left( \begin {array}{cccccccc} - 407 &
 192& - 888\,i& -
 432\,i& 960 &-
 528& 224 & 
 128\,i\\ \noalign{\medskip} 192&- 408& 432\,i& 888\,i&- 528 &
 960& - 128\,i& -
 224\,i\\ \noalign{\medskip} 888
\,i& - 432\,i&- 2736&-
 528& - 3040\,i& 
 128 \,i& 704 & 0
\\ \noalign{\medskip} 432\,i& -
 888 \,i&- 528&-
 2736& -128 \,i& 3040
\,i& 0 & 704\\ \noalign{\medskip}
 960 &- 528 & 
 3040\,i& 128.\,i&-
 4160 & 0 & -
 1024\,i& 0
\\ \noalign{\medskip}- 528 &
 960 & - 128 \,i&  - 3040
\,i& 0 &- 4160 &
  0 & 1024 \,i
\\ \noalign{\medskip} - 224\,i& 128\,i&
 704 & 0& 1024\,i& 0 &- 256 & 0 \\ \noalign{\medskip} -
 128\,i& 224\,i& 0 & 704 & 0 & - 1024
\,i& 0 &- 256 \end {array}
 \right) 
\] 
The inertia of this matrix is $n_+ = 1$, $n_- = 7$ and $n_0 = 0$. As observed above there are no roots on the boundary and $3 = 7 - 4 \cdot 1$ roots in the disc, as predicted by Theorem \ref{thm:count_quadrature}.

\end{ex}

\begin{ex} \label{ex:double_pole}
In \cite[Theorem 4]{AhaSha76} it is shown that a quadrature domain that satisfies the quadrature identity with a single point and order two is simply connected. Assuming that the point is the origin, Ahronov and Shapiro prove that the boundary is given by the polynomial $P(z,w) = z^2 w^2 - a z w -b (z + w) -c$, where $a,b,c >0$ are constants and the domain itself is the image of the unit disc under a quadratic polynomial, that is univalent on $\D$. In fact, this domain is bounded by the classical cardioid. If we take the polynomial to be $z^2 + 3 z$, then $P(z,w) = z^2 w^2 - 11 z w - 9(z + w) -8$. So $\varphi(z) = \frac{(z-i)^2}{(z+i)^2} + 3 \frac{z-i}{z+i}$. Furthermore, $U$ itself is the set of points, such that $P(z,\overline{z}) < 0$. 

Let $p = z^4 + 5 z^3 -2 z^2 + 3 z -4$ again, then $\frac{1}{2i} J B_{\chi}(p,p^{\tau})$ is a $16 \times 16$ matrix with inertia $n_+ = 5$, $n_- = 11$ and $n_0 = 0$. Therefore, the formula tells us that the polynomial should have $11 - 8 = 3$ roots in this domain. Using the Maple polynomial solver it easily verified that indeed three of the roots satisfy $P(z,\overline{z}) < 0$ and one does not. Unfortunately, the matrix is too big to be presented in the text, therefore let us take $q(z) = z^2 -4$ and observe that it has one root on the boundary of our quadrature domain and another inside. The associated matrix is:
\[
\left( \begin {array}{cccccccc} - 2248 & 328& -
 4152 i& - 984 i& 2560&- 640& 512 i& 128 i\\ \noalign{\medskip} 328 &- 2248
& 984 i&  4152 i&- 640& 2560& -
 128 i& - 512 i\\ \noalign{\medskip} 4152 i& -
 984 i&- 10256 &- 640& - 7168 i& 
 128 i& 1536 & 0 \\ \noalign{\medskip} 984 i& - 4152 i&- 640&- 10256& - 128 i&
 7168 i& 0& 1536\\ \noalign{\medskip}
 2560&- 640& 7168 i& 128 i&-
 5248& 0&- 1152 i& 0
\\ \noalign{\medskip}- 640 & 2560 & - 128 i&
 - 7168 i& 0&- 5248 & 0&
 1152 i\\ \noalign{\medskip} - 512 i& 128 i& 1536 & 0 & 1152 i& 0 &- 256 & 0 \\ \noalign{\medskip} - 128 i& 512 i& 0 & 1536 & 0 & - 1152 i& 0 &- 256 \end {array} \right)
\]
This matrix has kernel of dimension $1$ and $n_- = 5$ as expected.

The following figure shows the disc, the cardioid and the zeroes of $p$:

\begin{ex} \label{ex:neumann}
A Neumann oval is the reflection of an ellipse with respect to the circle. It is a quadrature domain with quadrature identity of order two, with two distinct nodes (see \cite{Gus83} for a discussion of this case). In particular, one can obtain a Neumann oval as the image of the upper half-plane under the map $\varphi(z) = \frac{15(z^2 + 1)}{6 z^2 + 20i z - 6}$. It is clear that the map is of degree $2$. A straightforward calculation shows that $\varphi$ is indeed univalent on the upper half-plane.

The Bezoutian of $p$ in this case is again an invertible $16\times 16$ matrix and the number of negative eigenvalues of $\frac{1}{2i} J B_{\chi}(p(\varphi(z)),\overline{p(\varphi(\bar{z}))})$ is $11$. the number of roots of $p$ inside the oval is $3$. As in Example \ref{ex:double_pole} the size of the matrix prevents us from presenting the Bezout matrix associated to $p$ and instead we will present the matrix associated to $z^2 -4$
\[
\begin{small}\left( \begin {array}{cccccccc} - 10.579& 3.958&
 3.255 i& - 4.167 i& 8.965& 0 & 
 24.61 i& 0 \\ \noalign{\medskip} 3.958 &-
 10.579 &  4.167 i&  - 3.255 i& 0 &
 8.965 & 0 &  - 24.61 i\\ \noalign{\medskip}
  - 3.255 i&  - 4.167 i&- 1.447 & 0 &  
 10.547 i& 0 &- 23.438 & 0 
\\ \noalign{\medskip}  4.168 i&  3.255 i& 0 &-
 1.447 & 0 &  - 10.547 i& 0 &-
 23.438 \\ \noalign{\medskip} 8.965 & 0 &
  - 10.547 i& 0  &- 388.183 & 189.375 &
  - 615.234 i&  - 337.5 i\\ \noalign{\medskip} 0 & 8.965 & 0 &  10.547 i& 189.375 &-
 388.183 &  337.5 i&  615.234 i
\\ \noalign{\medskip}  - 24.61 i& 0 &- 23.438 
& 0 &  615.234 i&  - 337.5 i&- 1054.688 & 0 \\ \noalign{\medskip} 0 &  24.61 i&
 0 &- 23.438 &  337.5 i&  - 615.234 i&
 0 &- 1054.688 \end {array} \right) \end{small}
\]
The precision has been trimmed to three digits after the decimal point for presentation reasons. One can check that this matrix indeed has 6 negative eigenvalues that give us $2$ roots inside the domain and indeed both $2$ and $-2$ are in the interior.
\end{ex}

\begin{ex} \label{ex:order_3}
The next example is of domain of order $3$ with rotational symmetry. Such domains were first considered by Gustafsson in \cite{Gus88}. He has constructed one parameter families of quadrature domain with three nodes at $1$, $e^{2 \pi i/3}$ and $e^{4 \pi i /3}$. Some of the domains were simply connected, while others were doubly connected. We will consider here the domain obtained as the image of the upper half-plane under the map $\varphi(z) = \frac{63(z+ i)^2 (z-i)}{28 z^3 +108 i z^2 -84 z -36 i}$. By running the procedure on $p$ we obtain a $24 \times 24$ matrix with $15$ negative eigenvalues, which is again expected since $p$ has precisely three roots inside the domain and the degree of $\varphi$ is $3$. Again, we do not present the Bezout matrix associated to $p$ itself, due to its size. For presentation we compute the matrix associated to the polynomial $z$
\[
\begin{small}\left( \begin {array}{cccccc} - 1.313 & 1.393- 0.557 i&-
 0.763- 0.394 i& 0 &- 1.969- 1.137 i& 0 
\\ \noalign{\medskip} 1.393+ 0.557 i&- 1.313 & 0 &-
 0.763+ 0.394 i& 0 &- 1.969+ 1.137 i\\ \noalign{\medskip}-
 0.763+ 0.394 i& 0 &- 1.313 & 1.393+ 0.557 i&- 1.969
+ 1.137 i& 0 \\ \noalign{\medskip} 0 &- 0.763-
 0.394 i& 1.393- 0.557 i&- 1.313 & 0 &- 1.969- 1.137
 i\\ \noalign{\medskip}- 1.969+ 1.137 i& 0 &- 1.969- 1.137
 i& 0 &- 9.188 &- 10.500 \\ \noalign{\medskip}
 0 &- 1.969- 1.137 i& 0 &- 1.969+ 1.137 i&- 10.500
 &- 9.187 \end {array} \right) \end{small}
\]

\begin{figure}[H]
\caption{The disc and the cardioid with roots of $p$ in red}
\centering
\includegraphics[width=7.7cm,height=7.7cm]{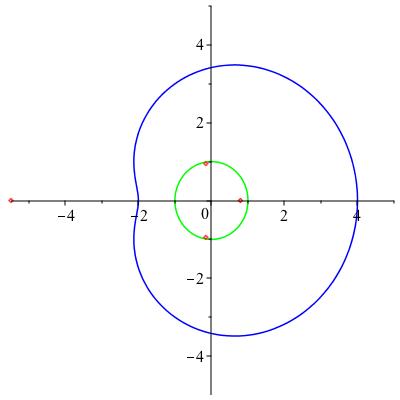}
\end{figure}
\end{ex}

\begin{figure}[H]
\caption{The Neumann oval and the domain of order $3$ with roots of $p$ in red}
\centering
\includegraphics[width=7.7cm,height=7.7cm]{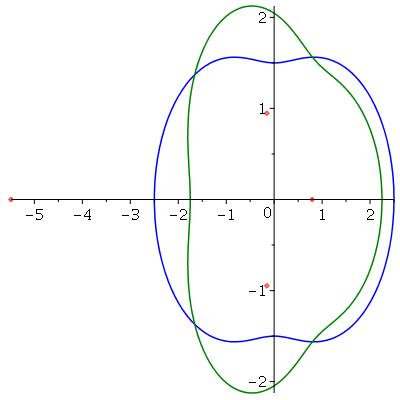}
\end{figure}
\end{ex}

\bibliographystyle{plain}
\bibliography{Quadrature_Domains}

\end{document}